\documentclass[11pt,twoside, reqno]{article}
\usepackage{amsmath,amssymb,amsfonts,amsthm,mathtools}
\usepackage{amsfonts}
\usepackage{amssymb}
\usepackage{amsmath}
\usepackage{amsthm}
\usepackage{latexsym}
\usepackage{mathrsfs}
\usepackage{bbm}
\usepackage{comment}

\usepackage{txfonts}
\usepackage{enumerate}
\usepackage{hyperref}
\usepackage{graphicx}
\usepackage{authblk}

\usepackage{color}
\usepackage{pgf,tikz}
\usepackage{mathrsfs}
\usetikzlibrary{arrows}
\pagestyle{empty}
\usepackage{cite}

\allowdisplaybreaks

\pagestyle{myheadings}
\markboth{\footnotesize\sc Li-Juan Cheng, Anton Thalmaier and Feng-Yu Wang}
{\footnotesize\sc Covariant Riesz transform for $p>2$}

\textwidth=15.5truecm
\textheight=22truecm
\oddsidemargin=0.35cm
\evensidemargin=0.35cm
\topmargin=0cm

\def\fz{\infty}

\def\r{\right}
\def\lf{\left}

\newcommand\SM{\mathscr M}
\newcommand\mstrut{{\phantom{.}}}

\def\wz{\widetilde}

\def\ls{\lesssim}

\def \ptr{/\!/}

\def\scr{\mathscr} \def\rr{\rho}

\def\dsum{\displaystyle\sum}
\def\dint{\displaystyle\int}

\def\dsup{\displaystyle\sup}

\newcommand\newdot{{\kern.8pt\cdot\kern.8pt}}
\newcommand\smallbullet{{\displaystyle\boldsymbol{\cdot}}}
\newcommand\bolddot{{\displaystyle\boldsymbol{.}}}

\def\e{{\rm e}}
\def\mathpal#1{\mathop{\mathchoice{\text{\rm #1}}%
   {\text{\rm #1}}{\text{\rm #1}}%
   {\text{\rm #1}}}\nolimits}

\newcommand\Ric{\mathpal{Ric}}
\newcommand\Hess{\mathpal{Hess}}
\newcommand\Riem{\mathpal{Riem}}

\newcommand\End{\mathpal{End}}
\newcommand\Hom{\mathpal{Hom}}

\newcommand\tr{\mathpal{tr}}
\newcommand\loc{\mathpal{loc}}
\newcommand\vol{{\operatorname{vol}}}

\newcommand\E{\mathbb{E}}
\newcommand\R{\mathbb{R}}
\newcommand\N{\mathbb{N}}
\newcommand\Z{\mathbb{Z}}

\newtheorem{theorem}{Theorem}[section]
\newtheorem{lemma}[theorem]{Lemma}
\newtheorem{corollary}[theorem]{Corollary}
\newtheorem{proposition}[theorem]{Proposition}
\theoremstyle{definition}
\newtheorem{remark}[theorem]{Remark}

\newtheorem{definition}[theorem]{Definition}

\def\supp{{\mathop\mathrm{\,supp\,}}}

\def\d{\mathrm{d}}
\def\ss {\sqrt}
\numberwithin{equation}{section}
\newcommand\1{\mathbbm{1}}

\def\supp{{\mathop\mathrm{\,supp\,}}}

\def\gg{\gamma} \def\vv{\varepsilon}
 \def\T{{\bf T}} \def\si{\sigma} \def\nn{\nabla} \def\OO{\Omega}\def\DD{\Delta} \def\dd{\delta} \def\ff{\frac}\def\beg{\begin} \def\aa{\alpha}\def\d{{\rm d}}
\def\tt{\tilde} \def\beq{\beg{equation}}

\begin{document}

\title{\vskip-2.1cm\bf\large $L^p$-Boundedness of the Covariant Riesz Transform\\ on Differential Forms for $p>2$
  \footnotetext{\hspace{-0.35cm} 2010 {\it Mathematics Subject
      Classification}. Primary: 35K08; Secondary: 58J65, 35J10, 47G40.
    \endgraf {\it Keywords and phrases}.  Covariant Riesz transform,  Heat kernel, Bochner formula,
    Calder\'{o}n-Zygmund inequality,
    Hardy-Littlewood maximal function,
    Kato inequality.  \endgraf This work has been supported 
    in part by the National Key R\&D Program of China (2022YFA1006000), NSFC (12531007)   and Natural Science Foundation of Zhejiang Provincial (Grant No.\,LGJ22A010001). 
}}

\author[1]{Li-Juan Cheng}
\author[2]{Anton Thalmaier}
\author[3]{Feng-Yu Wang}

\affil[1]{\scriptsize School of Mathematics, Hangzhou Normal
  University,\par
  Hangzhou 311121, People's Republic of China\par
  \texttt{lijuan.cheng@hznu.edu.cn}\vspace{1em}}

 \affil[2]{\scriptsize Department of Mathematics, University of Luxembourg,
  Maison du Nombre,\par
  L-4364 Esch-sur-Alzette, Luxembourg\par
  \texttt{anton.thalmaier@uni.lu}\vspace{1em}}

  \affil[3]{\scriptsize Center for Applied Mathematics and KL-AAGDM, Tianjin
      University,\par Tianjin 300072, People's Republic of China\par
  \texttt{wangfy@tju.edu.cn}}

\date{\small\today}
\maketitle

\begin{abstract} {\noindent 
We establish the \(L^p\)-boundedness, for \(p>2\), of the covariant Riesz transform
\[
\nabla(\Delta_\mu^{(k)}+\sigma)^{-1/2}
\]
on differential forms over a class of complete weighted Riemannian manifolds. The proof is based on an  heat-kernel criterion involving local volume doubling, heat kernel upper estimates, Kato-type curvature control, and gradient bounds for the heat semigroup on forms. Under curvature-dimension assumptions and Kato-type curvature bounds, this criterion applies and yields boundedness for all sufficiently large \(\sigma\). In particular, in the unweighted case, the result confirms a conjecture of Baumgarth, Devyver and Güneysu~\cite{BDG-23}. As an application, we obtain Calderón--Zygmund inequalities for \(p>2\) on weighted manifolds, which extends  the recent work \cite{CCT} on manifolds without weight.
}
\end{abstract}

\tableofcontents

\section{Introduction\label{s1}}
Let \((M, g)\) be a complete geodesically connected \(m\)-dimensional Riemannian manifold, \(\nabla\) the Levi-Civita covariant derivative, and \(\Delta\) the Laplace-Beltrami operator. The operator \(\Delta\) is understood as a self-adjoint positive operator on \(L^2(M)\). The Riesz transform on the space of smooth functions on Euclidean space, defined by \(\T^{(0)} := \nabla \Delta^{-1/2}\), was first introduced by Strichartz \cite{Str83}. The \(L^p\)-boundedness of \(\T^{(0)}\) and its extension to manifolds have been the subject of extensive research; see \cite{Chen92,Barkry85I,Barkry85II,Bakry87,PTTS-2004,TX-99,CD-01,CD-03,Li91,ThW-04} and the references therein.

In this paper, we investigate the \(L^p\)-boundedness of the covariant Riesz transform on the space \(\OO^{(k)} := \Gamma(\Lambda^k T^*M)\) of smooth differential \(k\)-forms for \(k \in \{1, \ldots, m\}\):
\begin{align}\label{def-T}
\T^{(k)}_\si := \nabla (\Delta^{(k)} + \sigma)^{-1/2}, \quad\si\in (0,\infty),
\end{align} 
where  \(\nabla\) denotes the Levi-Civita covariant derivative, and  \(\Delta^{(k)}\) the Hodge Laplacian on $\OO^{(k)}$.

For \(p \in (1,2)\), the \(L^p\)-boundedness of $\T_{\sigma}^{(k)}$ was established by F.-Y. Wang and A. Thalmaier \cite{ThW-04}, following the approach of Coulhon and Duong \cite{TX-99} by verifying the doubling volume property, Li–Yau heat kernel upper bounds, and heat kernel derivative estimates. This result was later improved by Baumgarth, Devyver, and G\"uneysu \cite{BDG-23}, who relaxed the boundedness condition on the derivatives of the curvature, and further in \cite{CTW}, where the curvature derivative condition was entirely removed.
However,  as explained in \cite{ThW-04}, the  argument developed in \cite{TX-99} does not apply to the case $p>2$.  The \(L^p\)-boundedness of $\T_{\sigma}^{(k)}$  in this regime remained an open problem for some time and was formulated as a conjecture by Baumgarth, Devyver, and G\"uneysu \cite{BDG-23}.

\paragraph{Conjecture \cite{BDG-23}.}  \emph{Assume that the Riemannian curvature tensor $\Riem$ satisfies
 $$ \max\left\{\|\Riem\|_{\infty}, \|\nabla \Riem\|_{\infty} \right\}\leqslant A $$
for some constant $A$. Then  there exists a constant $\si_0 \in (0,\infty)$  depending only on $A$ and $m$, such that  for any $\si \in [\si_0,\infty)$ and $p\in (1,\infty),$
$$\sup_{1\leqslant k\leqslant m} \big\|\T_\si^{(k)}\big\|_{p\rightarrow p}\leqslant B$$ holds for some  constant $B\in (0,\infty)$ depending only on $A$,
$\si$ and $m$, where $\|\cdot\|_{p}$ denotes the $L^p$-norm on $M$ with respect to the volume measure.   }

\medskip
We note that  when $\nn$ is replaced by the exterior differential  \(\d^{(k)}\) or its
\(L^2\)-adjoint  \(\delta^{(k)}\), the \(L^p\)-boundedness of  \(\d^{(k)} (\Delta^{(k)}+\sigma)^{-1/2}\) and \(\delta^{(k-1)}(\Delta^{(k)}+\sigma)^{-1/2}\)   has been derived in  \cite{Bakry87, Li2010}, but the techniques developed therein do not apply to the covariant Riesz transform $\T_{\sigma}^{(k)}$.

The main goal of this paper is to confirm the above conjecture by proving the \(L^p\)-boundedness of $\T_{\sigma}^{(k)}$ for \(p \in (2, \infty)\), since the case \(1 < p \leqslant  2\) has already been settled in \cite{CTW}.
According to G\"uneysu and Pigola \cite{GP-15}, the \(L^p\)-boundedness of $\T^{(1)}_\si$  and $\T^{(0)}_\si$ implies that of  \(\Hess (\Delta + \sigma)^{-1}\), since
\[
\mathrm{Hess}(\Delta + \sigma)^{-1} = \nabla (\Delta^{(1)} + \sigma)^{-1/2} \circ \mathrm{d} (\Delta + \sigma)^{-1/2}.
\]
The \(L^p\)-boundedness of \(\mathrm{Hess}(\Delta + \sigma)^{-1}\), known as the Calderón–Zygmund inequality, was recently established for \(p > 2\) by Cao, Cheng, and Thalmaier \cite{CCT}. This  provides positive evidence  for the conjecture when $k=1$.

In this paper, under certain curvature conditions, we establish the $L^p(\mu)$-boundedness of the covariant Riesz transform on the space $\OO^{(k)}$  over a weighted Riemannian manifold:
$$\T^{(k)}_{\mu,\si}:= \nn (\DD_\mu^{(k)}+\si)^{-1/2},\quad 1 \leqslant k \leqslant m,$$
where $\mu(\d x):= \e^{h(x)}\,\vol(\d x)$ for some $h\in C^2(M)$ and the  volume measure $\vol$. The weighted Hodge Laplacian is defined as
\begin{equation} \label{DK} \DD_\mu^{(k)}:= \dd_\mu^{(k+1)} \d^{(k)}+ \d^{(k-1)} \dd_\mu^{(k)} \end{equation}
 with  $\dd_\mu^{(k+1)}\colon \OO^{(k+1)}\to \OO^{(k)}$ being the $L^2(\mu)$-adjoint of $\d^{(k)}$.  In particular, when $h=0$, we have $\mu(\d x)=\vol(\d x)$ and $\T_{\mu,\si}^{(k)}=\T^{(k)}_\si$, thereby confirming 
 the above conjecture.  For $k=0$ we write $\d=\d^{(0)}$ and
 $$\DD_\mu=\DD_\mu^{(0)} := \dd_\mu^{(1)}\d= \DD-\nn h,$$
 where $\DD$ is the Laplacian on $M$.

The remainder of the paper is organized as follows. In Section 2 we present our main results and their consequences. The proofs are given in Section 3 and Section 4, respectively.

\smallskip{\bf Acknowledgements.} The authors are indebted to Batu
G\"uneysu, Stefano Pigola and Giona Veronelli for helpful
comments on the topics of this paper.\goodbreak

\section{Main results and consequences}
We first introduce a general criterion on the $L^p$-boundedness ($p>2)$ of $\T_{\mu,\si}^{(k)}$ in terms of estimates on heat kernels and their gradients. Then we verify this criterion by exploiting curvature conditions, which in turn
   provides a positive answer to {\bf Conjecture \cite{BDG-23}.} As a consequence, the Calder\'on-Zygmund inequality is presented for $ p>2.$

   Let $P_t$ be the diffusion semigroup on $M$ generated by the weighted Laplacian $-\DD+\nn h$, and $p_t$ be the heat kernel of $P_t$ with respect to $\mu$.
  We introduce below the \textit{contractive Dynkin  class} of functions, which is also called generalized or extended Kato class, and has been systematically studied first by P. Stollmann and J. Voigt in \cite{Peter-Jurgen}.

  \begin{definition}\label{def-K}{(Contractive Dynkin  class)}
    We say that a function $f$ on $M$ belongs to the class $\hat{\mathcal{K}}$
    (in short: $f\in \hat{\mathcal{K}}$) if
\begin{align*}
\lim_{\aa\downarrow 0} \sup_{x\in M} \int_M \int_0^{\alpha} p_s(x,y) |f(y)| \,\d s\, \mu(\d y)<1.
\end{align*}
\end{definition}

Note that $\hat{\mathcal{K}}$ contains the usual Kato class $\mathcal{K}$,
defined as the set of functions $f$ such that
\begin{align*}
\lim_{\alpha\downarrow  0} \sup_{x\in M} \int _M \int_0^{\alpha} p_s(x,y)
|f(y)| \, \d s\, \d\mu(y)=0.
\end{align*}
The Kato class plays an important role in the study of Schr\"{o}dinger
operators and their semigroups, see Simon \cite{Simon} and the reference therein.
 It is straight-forward that $f\in \hat{\mathcal{K}}$ if $f$ is bounded.

To state the main result, we first introduce the weighted volume on $M$ and the weighted curvature operator  on $\OO^{(k)}$.
 For $x\in M$ and $r>0$, let $B(x,r)$ be the open geodesic ball centered at $x$ of radius~$r$, and
$$\mu(x,r):=\mu\big(B(x,r)\big)=\int_{B(x,r)}\e^{h(y)}\,\vol(\d y).$$
The weighted curvature operator $\scr R^{(k)}_h$ on $\OO^{(k)}$ is defined as
 $$ \mathscr{R}^{(k)}_h(\eta):= \scr R^{(k)}(\eta) -(\Hess h)^{(k)}(\eta),$$
 where for an  orthonormal frame $\left(e_{i}\right)_{1\leqslant i \leqslant  m}\in O(M)$ with  dual frame $(\theta^j)_{1\leqslant j\leqslant  m},$
 \beg{align*}
&\scr R^{(k)} :=-\sum_{i, j=1}^{m} \theta^{j} \wedge\big(e_{i}\mathop{\lrcorner} \mathrm{R}(e_{j}, e_{i})\big),\\
&(\Hess h)^{(k)}:= \sum_{i,j=1}^m e_i\big(e_j(h)\big)\Big( \theta^j\wedge \left(e_i \mathop{\lrcorner} \cdot \right) \Big),\\
&X\mathop{\lrcorner} \eta\, (X_1,\ldots, X_{k-1}):= \eta(X,X_1,\ldots, X_{k-1}),\ \ \eta\in \OO^{(k)},\  X, X_1,\ldots, X_{k-1}\in TM.\end{align*}
   When $k=1$, we have
$$\mathscr{R}^{(1)}_h =\Ric_h:=  \Ric-\Hess h,$$ where $\Ric$ is the Ricci curvature of $M$.
By the Weitzenb\"{o}ck formula, we have the decomposition
$$\DD_\mu^{(k)}= \square_{\mu} +\mathscr{R}^{(k)}_h,$$
with respect to the Bochner Laplacian $\square_{\mu}:= \nn_\mu^* \nn,$ where
  $\nn_\mu^*$ denotes the $L^2(\mu)$-adjoint operator of $\nn$.

  Moreover, let $R^{(k)}$ be the curvature tensor on $\OO^{(k)}$. For any $\eta \in \OO^{(k)} $ and $v \in TM$,  define
   \begin{align*}& \big(R^{(k)}\cdot\eta\big)(v):=\sum_{i=1}^{n}R^{(k)}(v,e_i)\eta(e_i),\\
 & (\nabla \cdot R^{(k)})(v)\eta:=  \sum_{i=1}^n (\nabla_{e_i} R^{(k)})(e_i, v)\eta, \\
& \big(R^{(k)}(\nabla h)\big) (v)\eta :=R^{(k)}(v,\, \nabla h)\eta.
\end{align*}

For any $1\leqslant k\leqslant m$, let
$$P_t^{(k)}:=\e^{-t \DD_\mu^{(k)}},\quad t\geqslant 0$$
be the semigroup  on $\OO^{(k)}$ generated by $-\DD_\mu^{(k)}$ with $\DD_\mu^{(k)}$
defined in \eqref{DK}.
Finally, denote by $\OO^{(k)}_{b,1}$ the class of differential forms $\eta\in \OO^{(k)}$ for which $|\eta|+|\nn\eta|$ is bounded.

We are going to prove $L^p(\mu)$ boundedness of  $\T_{\mu,\si}^{(k)} $ for $p>2$ under the following assumptions.

\begin{enumerate}[{\bfseries (A)}]
\item There exist a constant $A \in (0,\infty)$ and a positive function $V_k\in \hat{\mathcal{K}}$ such that the following conditions hold:
  \begin{align}
  &\mu(x, \alpha r) \leqslant A \mu(x,r) \,\alpha^{m}\exp (A(\alpha -1)r),
  \quad x\in M,\ \aa>1,\ r>0,
  \label{A1}  \tag{{\bf LD}}
  \\[1.5ex]
  &p_t( x, x)\leqslant \frac{A\e^{A t}}{\mu(x, \sqrt{t})},
  \quad x\in M,\ t>0,
  \label{A2} \tag{{\bf UE}}
  \\[1.5ex]
  & \big\langle \mathscr{R}^{(k)}_h(\eta), \eta \big\rangle \geqslant -V_k|\eta|^2,
  \quad\eta\in \OO^{(k)},
  \label{BD-R}  \tag{{\bf Kato}}
  \\[1.5ex]
  &|\nn P_t^{(k)} \eta|\leqslant \e^{A+At} \min\Big\{t^{-1/2}  (P_t|\eta|^2)^{1/2},\
 \big((P_t|\nn\eta|^2)^{1/2}+  (P_t|\eta|^2)^{1/2}\big)\Big\},\quad\eta\in \OO^{(k)}_{b,1},\ t>0.
  \label{A3}\tag{{\bf GE}}
  \end{align}
\end{enumerate}

\begin{theorem}\label{main-them1} Assume that {\bf (A)} holds for $k\in \N$.
Then there exists a constant $\si_0\in (0,\infty)$ depending only on $A$    such that for any $p\in (2,\infty),$
\beq\label{A} \sup_{\sigma\in [\si_0,\infty)}\|\T_{\mu,\si}^{(k)}\|_{p\rightarrow p}\leqslant B\end{equation}
 holds for some constant $B\in (0,\infty)$ depending on $p,\,  m,  A$ and the Kato date of $V_k$.\end{theorem}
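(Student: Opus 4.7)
The plan is to reduce the $L^p$-boundedness of $\T^{(k)}_{\mu,\si}$ for $p>2$ to scalar heat-semigroup estimates, combining the subordination formula, the pointwise gradient estimate (GE), and a sharp-maximal-function argument in the spirit of the scalar Calderón--Zygmund proof of \cite{CCT}, with Feynman--Kac control of the Kato-class curvature potential $V_k$ replacing the pointwise curvature bound familiar from the bounded-curvature setting. First, the $L^2$-boundedness is established using the Weitzenböck identity $\DD_\mu^{(k)} = \square_\mu + \scr R^{(k)}_h$ together with $\scr R^{(k)}_h\geqslant -V_k$ from (Kato): for $\xi\in\OO^{(k)}_{b,1}$,
\[
\int_M|\nn\xi|^2\,\d\mu \leqslant \int_M\langle\DD_\mu^{(k)}\xi,\xi\rangle\,\d\mu + \int_M V_k|\xi|^2\,\d\mu,
\]
and the contractive Dynkin property of $V_k$ absorbs the last term into $\si\|\xi\|_2^2$ for $\si$ large enough.

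Next, by subordination,
\[
\T^{(k)}_{\mu,\si}\eta = \pi^{-1/2}\int_0^\infty t^{-1/2}\e^{-\si t}\,\nn P^{(k)}_t\eta\,\d t,
\]
and Bochner combined with (Kato) yields the Feynman--Kac bound $|P^{(k)}_t\eta|\leqslant P^{V_k}_t|\eta|$, where $P^{V_k}_t:=\e^{-t(\square_\mu - V_k)}$ is a scalar Schrödinger semigroup that is $L^p(\mu)$-bounded for every $p\in[1,\infty]$ precisely because $V_k\in\hat{\mathcal{K}}$. Using the two bounds in (GE), possibly after the factorisation $P^{(k)}_t = P^{(k)}_{t/2}\circ P^{(k)}_{t/2}$ (the outer gradient handled by the second bound applied to the smoother form $P^{(k)}_{t/2}\eta\in\OO^{(k)}_{b,1}$, the inner gradient by the first bound applied to $\eta$), I control $|\nn P^{(k)}_t\eta|$ pointwise by scalar quantities such as $t^{-1/2}(P_t|\eta|^2)^{1/2}$ and $P^{V_k}_t|\eta|$, thereby reducing the analysis to scalar expressions accessible through (UE) and (LD).

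The decisive step is to establish the sharp-maximal-function bound: for some $r\in(1,p)$, every ball $B$ of radius $\rho$, and every $x\in B$,
\[
\frac{1}{\mu(B)}\int_B\bigl|\T^{(k)}_{\mu,\si}\eta - (\T^{(k)}_{\mu,\si}\eta)_B\bigr|^r\,\d\mu \leqslant c\, M(|\eta|^r)(x),
\]
where $M$ is the Hardy--Littlewood maximal function. This is obtained by the parametrix decomposition $\T^{(k)}_{\mu,\si}\eta = \T^{(k)}_{\mu,\si}\bigl(I-\e^{-\rho^2(\DD_\mu^{(k)}+\si)}\bigr)\eta + \T^{(k)}_{\mu,\si}\e^{-\rho^2(\DD_\mu^{(k)}+\si)}\eta$: the local (first) term is controlled via $L^2$-boundedness from Step~1 and the doubling property (LD), while the nonlocal (second) term is handled by off-diagonal Gaussian-type estimates for $\nn P^{(k)}_t$ derived from (GE), (UE), (LD), and the Feynman--Kac control of the preceding paragraph. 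Combining with the Fefferman--Stein inequality then yields \eqref{A} for $p>2$.

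The main obstacle is precisely this sharp-maximal-function step. The first bound of (GE) alone produces a non-integrable $t^{-1}$ singularity at $t=0$ in the subordination integral, so no pointwise bound for $|\T^{(k)}_{\mu,\si}\eta|$ is available; the divergent integrand must instead be reorganised at the level of $L^p$-means through the sharp maximal function, while the Schrödinger perturbation from $\scr R^{(k)}_h$ is absorbed by Feynman--Kac. Controlling the curvature potential $V_k$ only under the contractive Dynkin hypothesis (rather than the customary $V_k\in L^\infty$) is the principal technical novelty, and what allows the $p>2$ analysis to proceed without recourse to the Coulhon--Duong Calderón--Zygmund decomposition that underpins the $p<2$ proof of \cite{CTW}.
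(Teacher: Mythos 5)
Your overall strategy---reduce to scalar heat-semigroup data via \eqref{A3} and Feynman--Kac domination of $P_t^{(k)}$ by the Schr\"odinger semigroup $P_t^{V_k}$, then run a good-$\lambda$/sharp-maximal argument for $p>2$---is the right family of ideas, and the $L^2$ step via Weitzenb\"ock and \eqref{V-ineq} is sound. However, as written the proposal has two genuine gaps.

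First, the ``decisive step'' is not obtainable in the form you state. A pointwise bound of the sharp maximal function of $\T^{(k)}_{\mu,\si}\eta$ by the Hardy--Littlewood maximal function of $|\eta|^r$ alone is too strong. For the nonlocal piece $\T^{(k)}_{\mu,\si}A_{\rho}\eta$ with $A_\rho=\e^{-\rho^2(\DD_\mu^{(k)}+\si)}$, the subordination integral combined with the first bound in \eqref{A3} produces a factor of order $\int_0^1 t^{-1/2}(t+\rho^2)^{-1/2}\,\d t\sim\log(1/\rho)$, which diverges as $\rho\to0$; no uniform pointwise control of this piece by data of $\eta$ alone exists. The correct criterion (Lemma \ref{theorem-add-2}, taken from \cite[Theorem 2.4]{PTTS-2004}) requires instead that the averaged $L^p$-norm of $\T^{(k)}_{\mu,\si}P^{(k)}_{\ell r^2}\alpha$ over $B$ be controlled by $\SM_{4B_j}\bigl(|\T^{(k)}_{\mu,\si}\alpha|^2\bigr)^{1/2}(x)$ plus an auxiliary bounded operator, i.e.\ the maximal function of $|\T^{(k)}_{\mu,\si}\alpha|^2$ itself must appear on the right-hand side. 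Establishing that composition estimate (Lemma \ref{lem-small-radii} and the verification of condition (ii)) is the real work, and it uses the second bound in \eqref{A3} (the $P_t|\nn\eta|$ bound, applied to $h=\int_0^\infty v(t)P_t^{(k)}\alpha\,\d t$) in an essential way that your proposal never invokes.

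Second, \eqref{A1} permits exponential volume growth, so $\mu$ is only locally doubling. The global Hardy--Littlewood maximal function is then not bounded on $L^p(\mu)$ and the global Fefferman--Stein inequality you want to conclude with is unavailable. The paper circumvents this by (a) discarding the $t\geqslant 1/2$ part of the subordination integral using $\si>A$ together with \eqref{A3}, and (b) localizing the remaining operator $\wz\T^{(k)}_{\mu,\si}$ to a uniformly locally finite covering by unit balls (Lemmas \ref{lem-fop} and \ref{lem-tpg2}), running the maximal-function scheme with the relative maximal functions $\SM_{4B_j}$ and the local maximal function $\SM_{\loc}$, and controlling the interaction between different balls via the weighted $L^p$ gradient-kernel estimate of Theorem \ref{Lp-estimate} and the Davies--Gaffney bounds of Lemma \ref{lem-Gaffney}. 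Without this localization the argument does not close.
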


  For the convenience of applications, we   present  below explicit  curvature conditions which ensure hypothesis {\bf (A)}. To this end, for $f\in C^\infty(M)$, let
$\Gamma_{2}(f, f):=-\frac{1}{2} \Delta_{\mu}|\nabla f|^{2}+(\nabla \Delta_{\mu} f, \nabla f)_g$.
\beg{enumerate}\item[{\bfseries (C)}]   There exist  constants $N\geqslant m$ and  $K_0>0$, and function $K\in {\mathcal{K}}$  such that
\begin{align}
 &\Gamma_{2}(f, f)\geqslant-K_0 |\nabla f|^{2}+\frac{1}{N}(\Delta_{\mu} f)^{2},\quad f\in C^\infty(M),\label{CD}\\
 & \big|\scr R_h^{(k)}\big| + \big|R^{(k)} \smallbullet\, \big| +  \big| \nabla  \cdot R^{(k)}+R^{(k)}(\nabla h) + \nabla \mathscr{R} ^{(k)}_h \big| \leqslant K.\hskip3cm  \label{CV}
\end{align}
 \end{enumerate}

The next theorem is then a consequence of Theorem \ref{main-them1}.

\begin{theorem}\label{C1} Assume that {\bf (C)} holds for $k\in \N$. Then there exists a  constant $\si_0 \in (0,\infty)$ depending only on $K_0$, $K$ and $N$ such that for any $p\in (2,\infty)$,
 $$\sup_{\si\in [\si_0,\infty)}\big\|\T_{\mu,\si}^{(k)}\big\|_{p\rightarrow p}\leqslant B$$
 holds for some constant $B\in (0,\infty)$ depending on constants $p, K_0, m, N$ and the Kato data of $K$.

 In particular, if {\bf (C)}  holds for $k=1$, then there exists a
 constant $\si_0 \in (0,\infty)$ depending only on $m$, $K_0$, $N$ and the Kato data of $K$, such that
 $\|\Hess \, (\DD_{\mu} +\si)^{-1}\|_{L^p(\mu)}<\infty$ 
 for all \(\sigma \geqslant \sigma_0\) and \(p > 2\). As a consequence, the
 Calderón–Zygmund inequality holds for some constant \(C \in (0, \infty)\):
 \begin{equation}\label{CZ}
    \big\|\Hess  f \big\|_{L^p(\mu)}\leqslant C \left(\|f\|_{L^p(\mu)}+ \|\Delta_{\mu} f\|_{L^p(\mu)}\right),\quad f\in C_0^\infty(M). 
  \end{equation}
 \end{theorem}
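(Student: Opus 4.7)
The plan is to derive Theorem~\ref{C1} from Theorem~\ref{main-them1} by showing that assumption~\textbf{(C)} implies the four conditions comprising~\textbf{(A)}, with constants depending only on $K$ and $N$; the Calderón--Zygmund inequality~\eqref{CZ} will then follow from the $k=1$ case combined with the scalar Riesz transform bound.

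For \eqref{A1}, \eqref{A2} and \eqref{BD-R}, the deductions are largely classical. The curvature--dimension inequality~\eqref{CD} is the Bakry--Émery $CD(-K,N)$ condition for the weighted Laplacian $\Delta+\nabla h$; the Bakry--Qian volume comparison forces the local doubling property~\eqref{A1} with exponential error factor $\e^{A(\alpha-1)r}$, while the Li--Yau / Davies--Gaffney heat-kernel machinery adapted to $CD(-K,N)$ for symmetric Markov semigroups yields the on-diagonal bound~\eqref{A2} with a multiplicative correction $\e^{At}$. The pointwise estimate $|\mathscr{R}^{(k)}_h|\leqslant K$ coming from~\eqref{CV} gives $\langle\mathscr{R}^{(k)}_h(\eta),\eta\rangle\geqslant -K|\eta|^2$ for all $\eta\in\OO^{(k)}$, so~\eqref{BD-R} holds with $V_k\equiv K$, which is bounded and hence lies in $\hat{\mathcal K}$.

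The main obstacle will be the two-sided gradient estimate~\eqref{A3}. The approach is stochastic: represent $P_t^{(k)}\eta(x)$ by the Feynman--Kac-type formula
\[
P_t^{(k)}\eta(x)=\E\bigl[\ptr_t^{\,-1}\,Q_t^{(k)}\,\eta(X_t)\bigr],
\]
where $(X_t)$ is the $(\Delta+\nabla h)$-diffusion started at $x$, $\ptr_t$ is stochastic parallel transport, and $Q_t^{(k)}$ is the $\End(\Lambda^k T^*M)$-valued multiplicative functional solving $\d Q_t^{(k)}=-Q_t^{(k)}\mathscr{R}^{(k)}_h(X_t)\,\d t$. Covariant differentiation of this representation produces two complementary expressions for $\nabla P_t^{(k)}\eta$. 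The first, obtained by commuting $\nabla$ with the heat flow, expresses $\nabla P_t^{(k)}\eta$ in terms of $P_t$ applied to $\nabla\eta$ plus a perturbation whose driving tensors are precisely $R^{(k)}\smallbullet$ and $\nabla\cdot R^{(k)}+R^{(k)}(\nabla h)+\nabla\mathscr{R}^{(k)}_h$; Grönwall's inequality then yields the long-time branch $|\nabla P_t^{(k)}\eta|\leqslant\e^{At}\bigl(P_t|\nabla\eta|+AtP_t|\eta|\bigr)$. The second, obtained by a Bismut-type integration-by-parts, represents $\nabla P_t^{(k)}\eta$ as the expectation of $\eta(X_t)$ multiplied by a stochastic integral of order $t^{-1/2}$; combined with Itô isometry and Jensen's inequality this produces the short-time branch $|\nabla P_t^{(k)}\eta|\leqslant t^{-1/2}\e^{A+At}(P_t|\eta|^2)^{1/2}$. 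The $\sup$-bounds provided by~\eqref{CV} are exactly what is required to close both computations, with a constant $A$ depending only on $K$.

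Once~\textbf{(A)} has been verified, Theorem~\ref{main-them1} gives a constant $\sigma_0$ depending only on $K$ and $N$ such that $\|\T_{\mu,\sigma}^{(k)}\|_{p\to p}\leqslant B$ for all $\sigma\geqslant\sigma_0$ and $p>2$. For the Calderón--Zygmund inequality, we specialize to $k=1$ and combine this bound with the classical $L^p$-boundedness of the scalar Riesz transform $\d(\DD_\mu+\sigma)^{-1/2}$ under $CD(-K,N)$ (Bakry's $\Gamma_2$-calculus); the intertwining identity
\[
\Hess(\DD_\mu+\sigma)^{-1}=\nabla(\DD^{(1)}_\mu+\sigma)^{-1/2}\circ \d(\DD_\mu+\sigma)^{-1/2}
\]
then shows that $\Hess(\DD_\mu+\sigma)^{-1}$ is $L^p(\mu)$-bounded for $\sigma\geqslant\sigma_0$ and $p>2$. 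Applying this bounded operator to $(\DD_\mu+\sigma)f$ and using the triangle inequality together with $\|(\DD_\mu+\sigma)f\|_{L^p(\mu)}\leqslant\|\DD_\mu f\|_{L^p(\mu)}+\sigma\|f\|_{L^p(\mu)}$ produces~\eqref{CZ}.
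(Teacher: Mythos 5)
Your proposal is correct and follows essentially the same route as the paper: conditions \eqref{A1}, \eqref{A2}, \eqref{BD-R} are deduced from \eqref{CD} and \eqref{CV} via the weighted volume comparison and heat kernel bounds of Qian and Gong--Wang (with $V_k\equiv K$ bounded, hence in $\hat{\mathcal K}$), while the two branches of \eqref{A3} are obtained exactly as in the paper's Proposition \ref{cor} from the Driver--Thalmaier commutation relation and the local/global Bismut formulas, the $t^{-1/2}$ bound coming from the stochastic-integral term and the second bound from the formula involving $P_t|\nabla\eta|$ and the bounded tensor $H^{(k)}$. The concluding derivation of \eqref{CZ} via the factorization $\Hess(\DD_\mu+\sigma)^{-1}=\nabla(\DD^{(1)}_\mu+\sigma)^{-1/2}\circ\d(\DD_\mu+\sigma)^{-1/2}$ and Bakry's scalar Riesz transform bound is likewise the intended argument.
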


Note that on a geodesically complete manifold with Riemann curvature tensor $\Riem$
satisfying $\|\Riem\|_\infty<\infty$, there exists a sequence of Hessian cut-off functions (see \cite{GP-15}, p.~362), such that inequality \eqref{CZ} extends from $C_0^\infty(M)$ to
$f\in C^\infty(M)\cap L^p(\mu)$ with $\|\Delta_\mu f\|_\infty<\infty$.

\section{Proof of the Main Theorem}

To prove our main result (Theorem \ref{main-them1}), we
shall need the following  lemma,  which is due to \cite{TX-99}.

\begin{lemma}[\cite{TX-99}]\label{lem3}
If  \eqref{A1} holds, then   there exist a   constant  $c\in (0,\infty)$ and a function $C\colon (0,\infty)\to (0,\infty)$ depending only on $A$ and $m$, such that
\begin{align}\label{eqn-rho}
\int_{B\lf(x,\sqrt t\r)^c} \mathrm{e}^{-{\gg\rho^{2}(x, y)}/{s}} \,\mu(\d y) \leqslant C_{\gamma}\, \mu(x, \sqrt{s}) \,\mathrm{e}^{{cs}/\gamma  -{\gamma t}/ s},\quad s,t,\gamma>0,\ x\in M,
\end{align}
where $B(x,\sqrt t)^c:=\lf\{y\in M: \rr(x,y)\geqslant \sqrt t\r\}.$  In particular, $t\to 0$ yields
\begin{align}\label{integ-esti}
\int_M \frac{\e^{-{cs}/{\gamma} }}{C_{\gamma}\, \mu(x,  \sqrt{s})} \e^{-{\gg\rho^2(x,y)}/{s}}\, \mu(\d y) \leqslant 1,\quad s, \, \gamma>0,\ x\in M.
\end{align}
\end{lemma}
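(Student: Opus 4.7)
The plan is to prove \eqref{eqn-rho} by a classical dyadic annular decomposition of $B(x,\sqrt t)^c$, and then to deduce \eqref{integ-esti} by letting $t\downarrow 0$ and rearranging.

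First I would partition the exterior ball as $B(x,\sqrt t)^c=\bigsqcup_{k\geqslant 0}A_k$ with
$A_k:=\{y\in M : 2^k\sqrt t\leqslant \rho(x,y)<2^{k+1}\sqrt t\}$,
so that on $A_k$ one has the uniform Gaussian bound $\e^{-\gamma\rho^2(x,y)/s}\leqslant \e^{-\gamma 4^k t/s}$. The local doubling hypothesis \eqref{A1}, applied with $r=\sqrt s$ and $\alpha=2^{k+1}\sqrt t/\sqrt s$ in the regime $2^{k+1}\sqrt t\geqslant \sqrt s$ (the complementary regime being covered by the trivial bound $\mu(A_k)\leqslant \mu(x,\sqrt s)$), yields
\[
\mu(A_k)\leqslant A\,\mu(x,\sqrt s)\bigl(2^{k+1}\sqrt t/\sqrt s\bigr)^m\exp\bigl(2A\cdot 2^k\sqrt t\bigr).
\]

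Next I would sum the contributions over $k$. Writing $u_k:=2^k\sqrt t$, I split the Gaussian as $\e^{-\gamma u_k^2/s}=\e^{-\gamma u_k^2/(2s)}\cdot \e^{-\gamma u_k^2/(2s)}$. The first factor, combined with the factor $\e^{2Au_k}$ arising from the volume estimate, is controlled by completing the square,
\[
2Au_k-\frac{\gamma u_k^2}{2s}=\frac{2A^2 s}{\gamma}-\frac{\gamma}{2s}\Bigl(u_k-\frac{2As}{\gamma}\Bigr)^2,
\]
so that, together with the polynomial prefactor $(u_k/\sqrt s)^m$, the resulting series converges to a finite constant $C_\gamma$ depending only on $A$, $m$ and $\gamma$, with leading factor $\exp(2A^2 s/\gamma)$. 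The second factor $\e^{-\gamma u_k^2/(2s)}$ is bounded uniformly in $k\geqslant 0$ by $\e^{-\gamma t/(2s)}$ since $u_k^2\geqslant t$. Combining these estimates and absorbing the numerical constants by relabelling $\gamma\rightsquigarrow 2\gamma$ in the final bound produces \eqref{eqn-rho} for some constant $c\in(0,\infty)$ depending only on $A$.

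Finally, \eqref{integ-esti} follows from \eqref{eqn-rho} by letting $t\downarrow 0$: the domain of integration fills out all of $M$, the factor $\e^{-\gamma t/s}$ tends to $1$, and dividing through by $C_\gamma\,\mu(x,\sqrt s)\,\e^{cs/\gamma}$ yields the claimed inequality. I expect the principal subtlety to lie in the uniformity in $k$: the quadratic Gaussian decay in $u_k$ must genuinely dominate both the polynomial factor $u_k^m$ and the exponential volume-growth factor $\e^{2Au_k}$ from \eqref{A1}. This is precisely what completion of the square accomplishes, provided a positive fraction of the Gaussian is reserved to produce the tail factor $\e^{-\gamma t/s}$ and the remaining fraction is used to absorb polynomial and exponential growth.
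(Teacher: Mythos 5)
The paper does not prove this lemma at all; it is quoted from \cite{TX-99}, so your dyadic-annulus argument is the natural reconstruction of the standard proof and its core (completing the square to let the Gaussian absorb both the polynomial factor $(u_k/\sqrt{s})^m$ and the exponential volume-growth factor $\e^{2Au_k}$ coming from \eqref{A1}, at the cost of $\e^{cs/\gamma}$) is sound. The cosmetic issue of ending up with $\e^{-\gamma t/(2s)}$ rather than $\e^{-\gamma t/s}$ is harmless, since $c$ and $C_\gamma$ are at your disposal and every use of the lemma in the paper is insensitive to such renormalisations.

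There is, however, one genuinely lossy step: your treatment of the annuli in the ``complementary regime'' $2^{k+1}\sqrt{t}<\sqrt{s}$ by the per-annulus bound $\mu(A_k)\leqslant\mu(x,\sqrt{s})$, summed over $k$. The number of such annuli is of order $\log_2(s/t)$, and on each of them your Gaussian factor $\e^{-\gamma 4^k t/s}$ is bounded below by $\e^{-\gamma/4}$, so this part of your sum is bounded below by a constant times $\mu(x,\sqrt{s})\log_2(s/t)$, which is \emph{not} dominated by $C_\gamma\,\mu(x,\sqrt{s})\,\e^{cs/\gamma-\gamma t/s}$ uniformly as $t\downarrow 0$ --- and $t\downarrow 0$ is precisely the limit you need to deduce \eqref{integ-esti}. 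The repair is immediate: the annuli $A_k$ in this regime are pairwise disjoint subsets of $B(x,\sqrt{s})$, so instead of summing trivial bounds you should estimate their total contribution in one stroke,
\begin{equation*}
\int_{B(x,\sqrt{s})\cap B(x,\sqrt{t})^c}\e^{-\gamma\rho^2(x,y)/s}\,\mu(\d y)\leqslant \e^{-\gamma t/s}\,\mu(x,\sqrt{s}),
\end{equation*}
and reserve the dyadic decomposition (with the doubling bound and completion of the square) for the region $\rho(x,y)\geqslant\tfrac12\sqrt{s}$ only. With that modification the proof is complete.
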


\subsection{Heat kernel estimates}

By the usual abuse of notation, the  corresponding self-adjoint realizations
of $\Delta_{\mu}$ and $\Delta^{(k)}_{\mu}$ will again
be denoted by the same symbol. By local parabolic regularity, for all square-integrable $k$-forms $a \in L^2(\OO^{(k)},\, \mu)$, the time-dependent $k$-form
$$
(0,\infty) \times M \ni(t, x) \mapsto P_t^{(k)} a (x) \in\OO_x^{(k)}:= \Lambda^k T_x^*M
$$
has a smooth representative which extends smoothly to $[0,\infty)\times M$ if $a$ is smooth.
In addition, there exists a unique smooth heat kernel $p_t^{(k)}$ to $P_t^{k}$ with respect to the measure $\mu$, understood as a map
\begin{align*}
(0,\infty)\times M \times M \ni (t,x,y)\mapsto p_t^{(k)}(x,y)\in\Hom(\OO^{(k)}_y,\,  \OO^{(k)}_x )
\end{align*}
such that
\begin{align*}
P_t^{(k)}a(x)=\int_M p_t^{(k)}(x,y)a(y)\,\mu(\d y).
\end{align*}

Let $P_t^{V_k}$ be the heat semigroup associated to $\Delta_{\mu}+V_k$ and $p^{V_k}_{t}(x, y)$ be the
corresponding heat kernel.
If condition (\textbf{Kato}) in \textbf{(A)} holds,
then
\begin{align*}
| p_t^{(k)}(x,y)|\leqslant p^{V_k}_{t}(x, y).
\end{align*}
Combining this inequality with  \cite[Lemma 2.2]{MO2020} for  the upper bound estimate on $p^{V_k}_{t}(x, y)$, we obtain the following result; see  \cite{CZ07,  Sturm, Zhang00, Zhang01} for earlier results on Sch\"{o}dinger heat kernel estimates.

\begin{lemma}\label{CZ07}
  \label{Gaussian-estimate}
  Let $M$ be a complete non-compact Riemannian manifold satisfying  \eqref{A1}, \eqref{A2} and \eqref{BD-R}.   There exists a function \(C \colon (0, {1}/{4}) \to (0, \infty)\), depending only on \(A\), \(m\), and \(V_k\), such that for all \(x, y \in M\), \(t > 0\), and \(\gamma \in (0, 1/4)\),
\begin{align}\label{HE}
\big|p_t^{(k)}(x, y)\big| \leqslant \frac{C_\gg \e^{C_\gg t}}{\mu(y, \sqrt{t})} \exp \left(-\ff{\gg \rho(x, y)^{2} } t\right),\quad \forall x,y\in M, \, t>0,\ 0<\gg< 1/4,
\end{align}
where we write \(C_\gamma = C(\gamma)\) for notational simplicity.  
This estimate, combined with \eqref{integ-esti}, yields 
\begin{align}\label{integ-esti2}
\sup_{t\in (0,1],\,x\in M} \int_M   \big|p_t^{(k)}(x, y)\big| \, \mu(\d y) <\infty.
\end{align}
\end{lemma}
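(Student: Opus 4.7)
The plan is to combine two ingredients: a pointwise domination of $|p_t^{(k)}|$ by a scalar Schr\"odinger heat kernel, followed by an extension of the scalar Gaussian bound \eqref{A2} to that Schr\"odinger kernel when the potential lies in the contractive Dynkin class $\hat{\mathcal K}$. The paper essentially signals this two-step strategy, and my proposal would be to execute it in the obvious order.

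For the first step, I would start from the Weitzenb\"ock decomposition $\Delta_\mu^{(k)} = \square_\mu + \scr R_h^{(k)}$ together with hypothesis \eqref{BD-R}. Writing $\eta_t := P_t^{(k)} \eta_0$ and differentiating $|\eta_t|^2$, an application of Kato's inequality for the Bochner Laplacian (justified by the usual $\varepsilon$-regularization $\sqrt{|\eta_t|^2 + \varepsilon}$ to deal with the zero set of $|\eta_t|$) shows that $|\eta_t|$ is a weak subsolution of the associated scalar Schr\"odinger equation with potential $V_k$. An equivalent probabilistic route would be to represent $P_t^{(k)}\eta$ via horizontal Brownian motion and a matrix-valued multiplicative functional whose operator norm is controlled by $\exp\bigl(\int_0^t V_k(X_s)\,\d s\bigr)$. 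Either route yields the pointwise comparison
$$
|p_t^{(k)}(x,y)| \leqslant p_t^{V_k}(x,y),
$$
where $p_t^{V_k}$ denotes the Schr\"odinger heat kernel associated with the potential $V_k$.

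For the second step, I would invoke \cite[Lemma 2.2]{MO2020}: since $V_k \in \hat{\mathcal K}$, the scalar Li--Yau-type Gaussian bound \eqref{A2} transfers to $p_t^{V_k}$ up to an additional time-exponential prefactor $e^{C_\gamma t}$, with the Gaussian profile $\exp(-\gamma \rho(x,y)^2/t)$ preserved for every $\gamma \in (0,1/4)$. Combining with the domination from step one gives \eqref{HE}. For \eqref{integ-esti2} I would then fix any such $\gamma$, absorb $e^{C_\gamma t}$ into a constant for $t \in (0,1]$, and reduce the remaining integral directly to \eqref{integ-esti}. The main obstacle, I expect, is the rigorous verification of the domination step in the present generality: one has to check that the Kato-inequality / Feynman--Kac comparison remains valid when the curvature lower bound $V_k$ is only of contractive Dynkin class rather than bounded, and that no issue arises at the zero set of $|\eta_t|$ on a general geodesically complete non-compact weighted manifold.
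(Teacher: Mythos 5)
Your proposal follows essentially the same route as the paper: the paper also first asserts the semigroup domination $|p_t^{(k)}(x,y)|\leqslant p_t^{V_k}(x,y)$ under condition \eqref{BD-R} (citing it as a known Kato-type comparison), then invokes \cite[Lemma 2.2]{MO2020} to transfer the Gaussian bound \eqref{A2} to the Schr\"odinger kernel with potential $V_k\in\hat{\mathcal K}$, and finally deduces \eqref{integ-esti2} from \eqref{integ-esti}. The extra care you devote to justifying the domination (via $\varepsilon$-regularized Kato inequality or the Feynman--Kac multiplicative functional) is a reasonable elaboration of a step the paper treats as standard, not a different method.
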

 We are now ready to  present the following estimate.

\begin{theorem}\label{esti-L2}
Let $M$ be a complete non-compact Riemannian manifold satisfying  the condition {\bf (A)}.  There exists $C\colon (0,1/4)\to (0,\infty)$, depending only on $A,m$ and $V_k$, such that
\begin{align*}
\int_M \lf(t |\nabla p_t^{(k)}(z,y)|^2+|p_t^{(k)}(z,y)|^2\r) \e^{{2\gg\rho^2(z,y)}/{t} }\, \mu(\d z) \leqslant \frac{C_\gg \e^{C_\gg t }}{\mu(y,\sqrt{t})},\quad y\in M,\ t>0,\ 0<\gg< 1/4.
\end{align*}
\end{theorem}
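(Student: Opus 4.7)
The plan is to split the bound into the $|p_t^{(k)}|^2$-piece and the $t|\nabla p_t^{(k)}|^2$-piece. For the first piece, fix $\gamma'\in(\gamma,1/4)$ and square the Gaussian estimate from Lemma~\ref{CZ07} to obtain
\[
|p_t^{(k)}(z,y)|^2\, e^{2\gamma\rho(z,y)^2/t}\leqslant \frac{C_{\gamma'}^2\, e^{2C_{\gamma'}t}}{\mu(y,\sqrt{t})^2}\,e^{-2(\gamma'-\gamma)\rho(z,y)^2/t}.
\]
Integrating in $z$ and applying Lemma~\ref{lem3} (specifically \eqref{eqn-rho} in the limit $t\to 0$) to control $\int_M e^{-2(\gamma'-\gamma)\rho(z,y)^2/t}\mu(\d z)\leqslant C\mu(y,\sqrt{t})e^{Ct}$ yields the claimed bound $Ce^{Ct}/\mu(y,\sqrt{t})$ for this term.

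For the gradient piece, the key tool is \eqref{A3}. Using the semigroup identity $p_t^{(k)}(\cdot,y)=P_{t/2}^{(k)}\big(p_{t/2}^{(k)}(\cdot,y)\big)$ and applying the first inequality in \eqref{A3} with $\eta=p_{t/2}^{(k)}(\cdot,y)$ gives
\[
t|\nabla p_t^{(k)}(z,y)|^2\leqslant 2e^{2A+At}\,P_{t/2}\big(|p_{t/2}^{(k)}(\cdot,y)|^2\big)(z).
\]
Multiplying by $e^{2\gamma\rho(z,y)^2/t}$, integrating in $z$, and using that the scalar semigroup $P_{t/2}$ is self-adjoint in $L^2(\mu)$ to transfer the weight onto the kernel, one obtains
\[
\int_M t|\nabla p_t^{(k)}(z,y)|^2\, e^{2\gamma\rho^2/t}\,\mu(\d z)\leqslant 2e^{2A+At}\int_M |p_{t/2}^{(k)}(w,y)|^2\,\Phi_t(w,y)\,\mu(\d w),
\]
where $\Phi_t(w,y):=P_{t/2}\big(e^{2\gamma\rho(\cdot,y)^2/t}\big)(w)$. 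Using the scalar Gaussian bound from Lemma~\ref{CZ07} with $k=0$ together with the elementary splitting $\rho(z,y)^2\leqslant (1+\epsilon)\rho(w,z)^2+(1+\epsilon^{-1})\rho(w,y)^2$ and a further application of \eqref{eqn-rho}, one shows $\Phi_t(w,y)\leqslant Ce^{Ct}e^{2\gamma_1\rho(w,y)^2/t}$ for some $\gamma_1\in(\gamma,1/4)$. Feeding this back into the integral reduces the remaining estimate to the $|p_t^{(k)}|^2$-bound of the first step, but at time $t/2$ and with parameter $\gamma_1$, closing the argument.

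The main obstacle lies in the last step, where a delicate balance of constants is required. The triangle-inequality splitting inflates the residual weight to $\gamma_1=\gamma(1+\epsilon^{-1})$, while the Gaussian decay of $p_{t/2}(w,z)$ must dominate $(1+\epsilon)\gamma$, and the product $\gamma_1$ must remain strictly below $1/4$. Reaching the full admissible range $\gamma\in(0,1/4)$ may require either iterating the semigroup decomposition with optimally chosen intermediate times, or replacing the last step by a Davies–Gaffney-type weighted energy argument applied directly to $u(t,\cdot)=p_t^{(k)}(\cdot,y)$, where one computes $\tfrac{d}{dt}\!\int|u|^2 e^{\psi_t}\mu$ for a time-dependent quadratic weight $\psi_t=2\gamma\rho^2/t$ and absorbs the Kato-type reaction term arising from the Weitzenb\"ock formula and \eqref{BD-R} via the standard gauge trick. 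A small additional bookkeeping point is the comparison of $\mu(z,\sqrt{t})$ with $\mu(w,\sqrt{t})$ inside the Gaussian integral defining $\Phi_t$, which is handled by the local volume doubling assumption~\eqref{A1}.
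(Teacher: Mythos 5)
Your route is genuinely different from the paper's. The paper proves this theorem without using \eqref{A3} at all: it invokes the quadratic-form bound $\int_M V_k|f|^2\,\d\mu\leqslant\kappa\|\,|\nabla f|\,\|_2^2+c_1\|f\|_2^2$ from \cite[Lemma 2.2]{MO2020} together with the Gaussian bound of Lemma \ref{CZ07}, and then runs the weighted $L^2$ energy (Davies--Gaffney / integrated maximum principle) argument of \cite[Theorem 2.6]{CTW} on $p_t^{(k)}(\cdot,y)$ --- essentially the ``alternative'' you mention at the end --- plus \cite[Lemma 2.5]{CTW} for the non-gradient term. You instead derive the weighted $L^2$ gradient bound from the pointwise estimate \eqref{A3} by writing $p_t^{(k)}(\cdot,y)=P_{t/2}^{(k)}p_{t/2}^{(k)}(\cdot,y)$, dualizing the scalar semigroup, and transferring the Gaussian weight. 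Since \eqref{A3} is part of hypothesis {\bf (A)} this is legitimate and arguably more elementary; the trade-off is that the paper's version shows the estimate already follows from \eqref{A1}, \eqref{A2} and \eqref{BD-R} alone, which keeps the logical dependence on \eqref{A3} confined to Corollary \ref{gradient-estimate}.

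The difficulty you flag at the end is real, and it is exactly where your argument as written falls short of the stated range $\gamma\in(0,1/4)$. With the symmetric split at $t/2$ the two constraints are: (i) the Gaussian decay of $p_{t/2}(w,\cdot)$, at best $\exp(-\delta\rho^2/(t/2))$ with $\delta<1/4$, must absorb $\exp(2\gamma(1+\epsilon)\rho^2/t)$, forcing $\gamma(1+\epsilon)<1/4$; and (ii) the residual weight $\gamma_1=\gamma(1+\epsilon^{-1})$ must satisfy $2\gamma_1\rho^2/t=\gamma_1\rho^2/(t/2)\leqslant 2\beta\rho^2/(t/2)$ with $\beta<1/4$ in order to feed back into your first step at time $t/2$, i.e.\ $\gamma_1<1/2$. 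These are compatible only for $\gamma<1/6$ (the sharp Euclidean convolution identity gives the same threshold, so this is not merely an artifact of the triangle inequality). The fix is the one you hint at: use the asymmetric split $p_t^{(k)}=P_{\theta t}^{(k)}p_{(1-\theta)t}^{(k)}$ with $\theta=\theta(\gamma)$ small. Constraint (i) becomes $2\gamma(1+\epsilon)\theta<\delta$, harmless for small $\theta$, while constraint (ii) becomes $\gamma(1+\epsilon^{-1})(1-\theta)<1/4$, which holds for $\epsilon$ large precisely when $\gamma<1/4$; the price is the factor $\theta^{-1}$ coming from \eqref{A3}, absorbed into $C_\gamma$. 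With this modification (together with the doubling comparison of $\mu(w,\sqrt{\theta t})$ with $\mu(y,\sqrt t)$ that you already mention, and a word on why \eqref{A3} applies to $\eta=p_s^{(k)}(\cdot,y)$, which need not lie in $\OO^{(k)}_{b,1}$ a priori), your argument closes; otherwise you must fall back on the weighted energy method, which is what the paper actually does.
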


\begin{proof}
By \cite[Lemma 2.2]{MO2020}, if \( V_k \in \hat{\mathcal{K}} \), then there exist constants \( \kappa \in [0,1) \) and \( c_1 > 0 \), depending only on \( V_k \), such that
\begin{align}\label{V-ineq}
\int_M V_k |f|^2\, \d \mu \leqslant \kappa\left\| \, |\nabla f |\, \right\|_2^2+c_1\| f \|_2^2
\end{align}
for all \( f \in W^{1,2}(M) \). It means in particular that the operator \( \Delta - V_k + c_1 \) is strongly positive. Combining this with the Gaussian upper bound \eqref{HE} in Lemma \ref{Gaussian-estimate}, we find that the proof of \cite[Theorem 2.6]{CTW} remains valid under the present assumptions. As a consequence,
\begin{align*}
\int_M t |\nabla p_t^{(k)}(z,y)|^2 \e^{{2\gg\rho^2(z,y)}/{t} }\, \mu(\d z) \leqslant \frac{\tt C_\gg \e^{\tt C_\gg  t }}{\mu(y,\sqrt{t})},\quad y\in M,\ t>0,\ \gg\in (0,1/4)
\end{align*}
for some $\tt C\colon (0,1/4)\to (0,\infty)$ depending only on $A,m$ and $V_k$.
Combined with \cite[Lemma 2.5]{CTW}, this yields the desired estimate for some function \( C \colon (0, 1/4) \to (0, \infty) \).
\end{proof}

The following is a direct consequence of Theorem \ref{esti-L2} and extends \cite[Theorem 1.2]{BDG-23} to the case of weighted manifolds.

\begin{corollary}\label{gradient-estimate}
 Let $M$ be a complete non-compact Riemannian manifold satisfying the  condition {\bf (A)}.   There exists    $C\colon (0,1/8)\to (0,\infty)$ depending only on $A,m$ and the function $V_k$,  such that
\begin{align}\label{EstGradHeatKernel}
  |\nabla p_t^{(k)}(\newdot,y)(x)|\leqslant
  \frac{C_\gg \e^{C_\gg t}}{\sqrt{t}\,\mu(y,\sqrt{t})}\exp\Big(-\ff{\gg \rho^2(x,y)}t\Big), \quad
  \forall x,y\in M, \, t>0,\ 0<\gg<1/8.
\end{align}
\end{corollary}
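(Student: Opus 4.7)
The plan is to upgrade the integrated-weighted $L^2$-estimate of Theorem~\ref{esti-L2} into a pointwise Gaussian bound by combining the semigroup reproducing formula with the first branch of the gradient estimate~\eqref{A3}.

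First I would write $p_t^{(k)}(\cdot,y) = P_{t/2}^{(k)}\big(p_{t/2}^{(k)}(\cdot,y)\big)$ and apply \eqref{A3} at time $t/2$ with $\eta = p_{t/2}^{(k)}(\cdot,y)$ to obtain
\begin{equation*}
|\nabla p_t^{(k)}(\cdot,y)(x)|^2 \leqslant \tfrac{2}{t}\,\e^{2A+At}\int_M p_{t/2}(x,z)\,|p_{t/2}^{(k)}(z,y)|^2\,\mu(\d z).
\end{equation*}
(The smoothness and rapid decay from Lemma~\ref{Gaussian-estimate} allow one to replace $\eta$ by a compactly supported cutoff approximation $\chi_n\eta\in\OO_{b,1}^{(k)}$ and pass to the limit.) Into the integrand I would insert the scalar Gaussian upper bound furnished by the $k=0$ case of Lemma~\ref{Gaussian-estimate}, namely
\begin{equation*}
p_{t/2}(x,z)\leqslant \frac{C_{\gamma_0}\e^{C_{\gamma_0}t}}{\mu(z,\sqrt{t/2})}\,\e^{-2\gamma_0\rho^2(x,z)/t},\qquad \gamma_0\in(0,1/4),
\end{equation*}
and then distribute the Gaussian exponent using the elementary inequality $\rho(x,z)^2\geqslant \tfrac{1}{1+\epsilon}\rho(x,y)^2-\tfrac{1}{\epsilon}\rho(z,y)^2$ (valid for any $\epsilon>0$). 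This pulls a factor $\e^{-2\gamma_0\rho^2(x,y)/((1+\epsilon)t)}$ out of the integrand and leaves the residual weight $\e^{2\gamma_0\rho^2(z,y)/(\epsilon t)}$ inside.

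The remaining integral $\int_M |p_{t/2}^{(k)}(z,y)|^2\,\e^{2\gamma_0\rho^2(z,y)/(\epsilon t)}\,\mu(\d z)$ is then controlled by Theorem~\ref{esti-L2} applied at time $t/2$ with effective parameter $\gamma' = \gamma_0/(2\epsilon)$, provided $\gamma' < 1/4$. The residual volume factor $\mu(z,\sqrt{t/2})^{-1}$ is replaced by $\mu(y,\sqrt{t/2})^{-1}$ via the doubling hypothesis~\eqref{A1}: the center-change cost $A(1+\rho(y,z)/\sqrt{t/2})^m\e^{A\rho(y,z)}$ is dominated by $C_\delta\e^{c_\delta t}\,\e^{\delta\rho^2(y,z)/t}$ for any small $\delta>0$ (Young's inequality), and this $\delta\rho^2(y,z)/t$ is absorbed by a slight enlargement of $\gamma'$. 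A final application of~\eqref{A1} with $\alpha=\sqrt 2$ converts $\mu(y,\sqrt{t/2})^{-1}$ into $\mu(y,\sqrt t)^{-1}$ at the cost of a subexponential factor absorbed into the $\e^{Ct}$ prefactor. Taking the square root and multiplying by $\sqrt{2/t}\,\e^{A+At/2}$ yields the stated estimate with effective exponent $\gamma = \gamma_0/(1+\epsilon)$; choosing $\epsilon=1$ and $\gamma_0$ approaching $1/4$ covers the full range $\gamma\in(0,1/8)$.

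The main hurdle is not conceptual but parameter bookkeeping: both the rate in Lemma~\ref{Gaussian-estimate} and the rate in Theorem~\ref{esti-L2} are restricted to $(0,1/4)$, and the triangle-type splitting costs a further factor $1/(1+\epsilon)$ in the exponent, so one must carefully arrange $\gamma_0$, $\epsilon$, $\delta$ so that the final Gaussian exponent fills all of $(0,1/8)$ while every subexponential error (polynomial volume growth, linear-Gaussian tails from~\eqref{A1}, the change of scale $\sqrt{t/2}\leftrightarrow\sqrt t$) is absorbed into the $\e^{C_\gamma t}$ prefactor without contaminating the Gaussian factor.
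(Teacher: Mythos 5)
Your proposal is correct and follows essentially the same route as the paper: write $\nabla p_t^{(k)}(\cdot,y)=\nabla P_{t/2}^{(k)}\big(p_{t/2}^{(k)}(\cdot,y)\big)$, apply the first branch of \eqref{A3}, control the resulting integral via Theorem \ref{esti-L2} together with the scalar Gaussian bound and the doubling property. The only (cosmetic) difference is in the exponent bookkeeping — the paper extracts $\sup_{z}\e^{-2\gamma\rho^2(z,y)/t}p_t(x,z)$ and bounds it separately, whereas you insert the Gaussian bound into the integrand first and split the exponent with a Young-type triangle inequality — and your parameter accounting ($\gamma=\gamma_0/(1+\epsilon)$ after the final square root, with $\epsilon=1$) correctly fills the range $(0,1/8)$.
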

\begin{proof}
Let $x,y\in M$. It is easy to see that
$$\nabla p_{2t}^{(k)}(\newdot, y)(x)= \nabla P_t^{(k)}\lf( p_t^{(k)}(\newdot, y)\r)(x). $$
Using condition \eqref{A3}, we have
$$ |\nabla P_t^{(k)}\eta|\leqslant \frac{\e^{A+At}}{\sqrt{t}} (P_t|\eta|^2) ^{1/2},$$
for $\eta\in \Omega^{(k)}$ with $P_t(|\eta|^2)<\infty$.
 We use this inequality with $\eta(z)=p_t^{(k)}(\newdot, y)(z)$ to obtain
  \begin{align*}
    &\big|\nabla P_t^{(k)}\lf( p_t^{(k)}(\newdot, y)\r)\big|(x)
      \leqslant  \frac{\e^{A+At}}{\sqrt{t}}
      \lf(\int_M p_t(x,z)\bigl| p_t^{(k)}(z,y)\bigr|^2\, \mu(\d z)\r)^{1/2}.
  \end{align*}
By Theorem \ref{esti-L2},  this implies that for any $\gg\in (0,1/4)$,
  \begin{align}\label{Cor2.11}
    |\nabla p_{2t}^{(k)}(\newdot,y)(x)|
    &\leqslant \frac{\e^{A+At}}{\sqrt{t}} \left(\int_M |p_t^{(k)}(z,y)|^2 \e^{  \frac{2\gg \rho^2(z,y)}{t}- \frac{2\gg\rho^2(z,y)}{t}}p_t(x,z)\,\mu(\d z)\right)^{1/2}\nonumber \\
    &\leqslant  \frac{C_\gg \e^{A+(A+C_\gg)t}}{\sqrt{t\mu\big(y,\sqrt{t}\big)}}\sup_{z \in M}\left\{\e^{- \frac{2\gg\rho^2(z,y)}{t}}p_t(x,z)\right\}^{1/2} .
  \end{align}
  Since $p_t(x,x)$ satisfies the diagonal estimate \textbf{(UE)}, from the proof
of \cite[Lemma 3.2]{MO2020},   there exists   a function $\tt C\colon (0,1/4)\to (0,\infty)$ depending only on $A$ and $m$ such that
  \begin{align}\label{Gaussion-upper-bound}
  p_t(x,z)\leqslant  \frac{\tt C_\gg \e^{\tt C_\gg  t}}{\mu(x, \sqrt{t})} \exp \left(-\ff{2\gg \rho(x, z)^{2}} t\right),\ \ 0<\gg<1/8,\ t>0,\ x,y\in M.
  \end{align}
  By \eqref{A1}, there exists a  decreasing function
  $c\colon (0,1) \to (0,\infty)$ depending only on $A$ and $m$ such that
 \beg{align*}
 \mu\Big(y,\ss t\Big)&\leqslant \mu\Big(x, \ss t \big(1+ t^{-1/2}\rr(x,y)\big)\Big) \leqslant A\mu\Big(x,\ss t\Big) \big(1+t^{-1/2}\rr(x,y)\big)^m \e^{A\rr(x,y)}\\
 & \leqslant c_\vv \mu\Big(x,\ss t\Big) \exp\left(\ff{\vv   \rr(x,y)^2}t +c_\vv t\right),\quad \vv \in (0,1),\ t>0,\ x,y\in M.\end{align*}
  Combining this with \eqref{Gaussion-upper-bound}
   and
  $$2 \rr(x,z)^2+2\rr(y,z)^2\geqslant \rr(x,y)^2,$$
  we find $\hat C\colon \big\{(\gg,\vv)\colon\ 0<\vv<\gg<1/8\big\}\to (0,\infty)$ depending only on $A,m$ and $V_k$, such that 
  $$
 \sup_{z \in M}\left\{\e^{- \frac{2\gg\rho^2(z,y)}{t}}p_t(x,z)\right\} \leqslant   \frac{\hat C_{\gg,\vv} \e^{\hat C_{\gg,\vv} t}}{\mu(y,\sqrt{t})}\exp\left(- \ff{(\gg-\vv)\rho^2(x,y)}t\right),\quad x,y\in M,\ t>0,\ 0<\vv<\gg<1/8.
  $$
  Combining this with \eqref{Cor2.11} yields 
   \begin{align*}
    |\nabla p_{2t}^{(k)}(\newdot,y)(x)|
    &\leqslant  \frac{\sqrt{\hat C_{\gg,\vv}}  C_\gg \e^{A+(A+C_\gg)t+\hat C_{\gg,\vv} t/2}}{\sqrt{t}\mu\big(y,\sqrt{t}\big)}\exp\Big(- \ff{(\gg-\vv)\rho^2(x,y)}{2t}\Big) .
  \end{align*}
  By this and \eqref{A1}, we obtain
  the desired estimate for some $C\colon (0,1/8)\to (0,\infty).$
\end{proof}

As a consequence of the pointwise estimates in Corollary \ref{gradient-estimate} and the local volume
doubling property \eqref{A1}, we have the following result which  extends \cite[Corollary 1.3]{BDG-23} to  the case of a weighted $L ^p$-estimates
of $|\nabla p_t^{(k)}|$.

\begin{theorem}\label{Lp-estimate}
  Let $M$ be a complete non-compact Riemannian manifold satisfying
  condition {\bf (A)}. Then    for any $p\in [1,\infty)$  there exists a function  $C\colon (0,1/8)\to (0,\infty)$ depending only on $p, A,m$ and the function $V_k$, such that
$$
\int_M\left|\sqrt{t} \nabla p_t^{(k)}(x, y)\right|^p \e^{{\gamma p \rho^2(x,y)}/{t}}\, \mu(\d x) \leqslant \frac{C_\gg \e^{C_\gg t}}{\lf(\mu\big(y, \sqrt{t}\big)\r)^{p-1}},\quad  y \in M, \ t>0,\ 0<\gg<1/8.
$$

\end{theorem}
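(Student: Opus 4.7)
The strategy is to upgrade the pointwise gradient estimate of Corollary~\ref{gradient-estimate} to an integrated $L^p$ bound by splitting the Gaussian decay into two parts: one part absorbs the exponential weight $\e^{\gamma p\rho^{2}/t}$, while the other provides enough decay to carry out the integration in $x$ against $\mu$. First I would fix $\gamma\in(0,1/8)$ and choose an intermediate parameter $\gamma':=(\gamma+1/8)/2\in(\gamma,1/8)$. Applying Corollary~\ref{gradient-estimate} with parameter $\gamma'$, raising to the $p$-th power, and multiplying by the weight $\e^{\gamma p\rho^{2}(x,y)/t}$, I obtain the pointwise bound
\begin{equation*}
\big|\sqrt{t}\,\nabla p_t^{(k)}(x,y)\big|^p\,\e^{\gamma p\rho^{2}(x,y)/t}
\leqslant \frac{C_{\gamma'}^{\,p}\,\e^{C_{\gamma'}p t}}{\mu(y,\sqrt{t})^{p}}\,
\e^{-p(\gamma'-\gamma)\rho^{2}(x,y)/t}.
\end{equation*}

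Next I would integrate in $x$ against $\mu$. The key tool is Lemma~\ref{lem3}: after relabeling the roles of the base point and the integration variable (legitimate since $\rho$ is symmetric and the local doubling condition \eqref{A1} holds uniformly in the base point), the bound \eqref{integ-esti} applied with $s=t$ and with $\gamma$ there replaced by the positive constant $p(\gamma'-\gamma)$ yields
\begin{equation*}
\int_M \e^{-p(\gamma'-\gamma)\rho^{2}(x,y)/t}\,\mu(\d x)
\leqslant C\,\mu(y,\sqrt{t})\,\e^{c t}
\end{equation*}
for constants $C,c$ depending only on $A,m,p,\gamma$. Combining the two displays and absorbing all prefactors into a single constant $C_\gamma$ and a single exponential $\e^{C_\gamma t}$ produces the desired estimate; note that the single factor of $\mu(y,\sqrt{t})$ gained from the integration reduces the exponent of $\mu(y,\sqrt{t})^{-1}$ from $p$ to $p-1$, exactly as required.

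The only subtlety is the justification of using Lemma~\ref{lem3} in the $x$-variable rather than the $y$-variable as stated, but this is routine because the proof of Lemma~\ref{lem3} uses only the local doubling property \eqref{A1}, which holds at every point of $M$, so the argument carries over after swapping the roles of the two arguments. Apart from this bookkeeping, no serious obstacle is anticipated: the dependence on $V_k$ enters solely through $C_{\gamma'}$ from Corollary~\ref{gradient-estimate}, whereas Lemma~\ref{lem3} contributes only dependence on $A$ and $m$, so the final constant $C_\gamma$ depends only on $p,A,m,V_k$, in agreement with the statement.
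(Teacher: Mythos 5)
Your argument is correct and is essentially the paper's own proof: both apply Corollary~\ref{gradient-estimate} with a slightly larger Gaussian exponent, raise to the $p$-th power so that a factor $\e^{-p(\gamma'-\gamma)\rho^2(x,y)/t}$ survives after multiplying by the weight, and then integrate that leftover Gaussian via Lemma~\ref{lem3} (inequality \eqref{integ-esti}, with the roles of the two arguments swapped, as the paper also does implicitly) to gain one factor of $\mu(y,\sqrt{t})$ and drop the exponent from $p$ to $p-1$. The only cosmetic difference is the parametrization (your $\gamma'=(\gamma+1/8)/2$ versus the paper's $(1-\vv)\gamma$), which is immaterial.
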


\begin{proof}
  According to inequality \eqref{integ-esti}, we find a function
  $h\colon(0,\infty)\to (0,\infty)$ depending only on $A,m$, such that
$$
\int_{M} \mathrm{e}^{- {\gg\rho^{2}(x, y)}/{t}} \mu(\d x) \leqslant h_\gg\,\mu\big(y,\sqrt{t} \big )\,\mathrm{e}^{h_\gg t}, \quad t,\gg>0.
$$
By  Corollary \ref{gradient-estimate}, there exists $C\colon(0,1/8)\to (0,\infty)$ depending on $ A,m$ and $V_k$ such that 
\begin{align*}
&\int_M\left|\sqrt{t}\nabla p_t^{(k)}(x, y)\right|^p \e^{{(1-\vv)\gamma p \rho^2(x,y)}/{t}}\, \mu(\d x) 
\leqslant  \frac{C^p_{\gg} \e^{ pC_{\gg} t}}{\mu(y, \sqrt{t})^p}\int_M \e^{{-\left(p\gg- p (1-\vv)\gamma\right) \rho^2(x,y)}/{t}}\, \mu(\d x).
\end{align*}
Then by Lemma \ref{lem3}, we find $C,c\colon (0,1/8)\times (0,1)\to (0,\infty)$ depending only on $p,A,m$ and $V_k$, such that
\begin{align*}
&\int_M\left|\sqrt{t}\nabla p_t^{(k)}(x, y)\right|^p \e^{{(1-\vv)\gamma p \rho^2(x,y)}/{t}}\, \mu(\d x) \leqslant   \frac{C_{\gg,\vv} \e^{ c_{\gg,\vv} t}}{\mu(y, \sqrt{t})^{p-1}},\ \ \ (\gg,\vv)\in (0,1/8)\times (0,1),\ t>0,
\end{align*}
which completes the proof.
\end{proof}

We now introduce $L^2$-Davies-Gaffney bounds under condition ${\bf (A)}$
which extend the $L^2$-Davies-Gaffney bound in \cite[Theorem 1.9]{BDG-23}.
Recall that for two non-empty subsets \(E,F\subset M\), their separation is defined by
\[
d(E,F):=\inf\{\rho(x,y):x\in E,\ y\in F\}.
\]

\begin{lemma}\label{lem-Gaffney}
Assume that \eqref{A1}, \eqref{A2} and \eqref{BD-R} hold. Then there exist constants
\(c_1,c_2>0\), depending only on \(A,m\) and on the data of  \(V_k\), such that
for all non-empty relatively compact subsets \(E,F\subset M\), all \(t>0\), and all
\(\alpha\in L^2(\Omega^{(k)},\mu)\) with \(\operatorname{supp}\alpha\subset E\),
\begin{align}\label{Gaffney-ineq}
\|1_F\sqrt t\,|\nabla P_t^{(k)}\alpha|\|_2
\leqslant
c_1(1+\sqrt t)
\exp\left(-c_2\frac{d(E,F)^2}{t}\right)
\|1_E|\alpha|\|_2 .
\end{align}
\end{lemma}

\begin{proof}
All constants below depend only on \(A,m\) and on the Kato data of \(V_k\).
By Lemma \ref{Gaussian-estimate} and the standard Davies--Gaffney argument, one has
\[
\|1_FP_t^{(k)}\alpha\|_2
\leqslant
C \e^{-c d(E,F)^2/t}\|1_E\alpha\|_2.
\]
Moreover, since \(P_t^{(k)}\) is a self-adjoint analytic semigroup on
\(L^2(\Omega^{(k)},\mu)\), the Cauchy integral argument yields
\[
t\|1_F\Delta_\mu^{(k)}P_t^{(k)}\alpha\|_2
\leqslant
C \e^{-c d(E,F)^2/t}\|1_E\alpha\|_2.
\]
Equivalently,
\begin{align}\label{L2-Gaffney}
\|1_FP_t^{(k)}\alpha\|_2
+
t\|1_F\Delta_\mu^{(k)}P_t^{(k)}\alpha\|_2
\leqslant
C \e^{-c d(E,F)^2/t}\|1_E\alpha\|_2.
\end{align}

Let \(u=P_t^{(k)}\alpha\), and let \(\varphi\in C_0^\infty(M)\) satisfy
\(\varphi=1\) on \(F\). By the Weitzenböck formula and the lower bound
\[
\langle \mathscr{R}_h^{(k)}\eta,\eta\rangle \geqslant -V_k|\eta|^2,  \ \eta\in \OO^{(k)}
\]
we have
\[
\int_M \varphi^2|\nabla u|^2\,d\mu
\leqslant 
\int_M\varphi^2\langle \Delta_\mu^{(k)}u,u\rangle\,d\mu
+
\int_M V_k\varphi^2|u|^2\,d\mu
+
2\int_M\varphi |\nabla\varphi|\,|\nabla u|\,|u|\,d\mu .
\tag{3.10}
\]
Applying the Kato form bound \eqref{V-ineq} to the scalar function
\[
f=\varphi |u|
\]
and using Kato's inequality
\[
|\nabla |u||\leqslant|\nabla u|,
\]
we obtain, for some \(\kappa\in(0,1)\),
\[
\int_M V_k\varphi^2|u|^2\,d\mu
\leqslant
\kappa\int_M\varphi^2|\nabla u|^2\,d\mu
+
C\int_M|\nabla\varphi|^2|u|^2\,d\mu
+
C\int_M\varphi^2|u|^2\,d\mu.
\]
Using Young's inequality to absorb the terms containing
\(\varphi^2|\nabla u|^2\) into the left-hand side, we get
\[
\int_M\varphi^2|\nabla u|^2\,d\mu
\leqslant
C\int_M\varphi^2|\Delta_\mu^{(k)}u|\,|u|\,d\mu
+
C\int_M(\varphi^2+|\nabla\varphi|^2)|u|^2\,d\mu .
\]
Multiplying by \(t\), using Cauchy's inequality, and then applying the off-diagonal estimates
\eqref{L2-Gaffney} to \(u=P_t^{(k)}\alpha\) and \(\Delta_\mu^{(k)}u\), one obtains
\[
\|1_F\sqrt t\,|\nabla P_t^{(k)}\alpha|\|_2
\leqslant
C(1+\sqrt t)
\e^{-c d(E,F)^2/t}\|1_E\alpha\|_2 .
\]
This proves the desired estimate.
\end{proof}

\subsection{Proof of Theorem \ref{main-them1}}
To begin our discussion, we need the following lemma taken from  \cite[Section 4]{PTTS-2004}.
\begin{lemma}[\cite{PTTS-2004}]\label{lem-fop}
  If  \eqref{A1} holds, then there exist $N_0\in \mathbb N$ depending only on $A$ and $m$, and  a countable subset
  $ \{x_j\}_{j\geqslant 1}\subset M$, such that
  \begin{itemize}
  \item [{\rm (i)}] $M=\cup_{j\geqslant 1} B(x_j,1)$;

  \item [{\rm (ii)}] $\big\{B(x_j,1/2)\big\}_{j\geqslant 1}$ are disjoint;

  \item [{\rm (iii)}]  for every $x\in M$, there are at most $N_0$ balls $B(x_j,4)$ containing $x$;

  \item [{\rm (iv)}] for any $c_0\geqslant 1$, there exists a constant $C>0$ depending only on $c_0,A$ and $m$, such that for any $j\geqslant 1$ and $x\in B(x_j,c_0)$, 
    \begin{align*}
     & \mu\lf(B(x,2r)\cap B(x_j,c_0)\r)\leqslant C \mu\lf(B(x,r)\cap B(x_j,c_0)\r),\quad r\in (0,\infty), \\
      & \mu(B(x,r))\leqslant C \mu\lf(B(x,r)\cap B(x_j,c_0)\r),\quad r\in (0,2c_0].
    \end{align*}

  \end{itemize}
\end{lemma}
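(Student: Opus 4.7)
The plan is to proceed by a standard maximal/greedy construction combined with volume-comparison arguments derived from the local doubling property \eqref{A1}.

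First, to produce the points and establish (i) and (ii) simultaneously, I would invoke Zorn's lemma to pick a maximal family $\{x_j\}\subset M$ satisfying $\rho(x_i,x_j)\geqslant 1$ for $i\neq j$. The countability of this family is a consequence of \eqref{A1}: any fixed ball $B(z,R)$ contains only finitely many points of $\{x_j\}$, because the balls $B(x_j,1/2)$ with $x_j\in B(z,R)$ are pairwise disjoint and each has volume comparable to $\mu(B(z,R+1/2))$. Property (ii) follows immediately from the $1$-separation, and (i) follows from maximality: if some $x\in M$ lay at distance $\geqslant 1$ from every $x_j$, then $\{x_j\}\cup\{x\}$ would contradict maximality.

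For (iii), I would fix $x\in M$ and suppose $x\in B(x_{j_1},4)\cap\cdots\cap B(x_{j_N},4)$. Then every $x_{j_i}$ lies in $B(x,4)$, so the disjoint balls $B(x_{j_i},1/2)$ are all contained in $B(x,9/2)$. Summing yields
\begin{equation*}
\sum_{i=1}^N \mu\big(B(x_{j_i},1/2)\big)\leqslant \mu\big(B(x,9/2)\big).
\end{equation*}
Applying \eqref{A1} twice, first to bound $\mu(B(x_{j_i},9))$ from above in terms of $\mu(B(x_{j_i},1/2))$ and then to compare $\mu(B(x,9/2))\leqslant\mu(B(x_{j_i},9))$, yields a uniform bound $N\leqslant N_0$ with $N_0$ depending only on $A$ and $m$.

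The main obstacle is item (iv), which is a localized relative-doubling statement. For the first inequality, I would split into two regimes. If $B(x,2r)\subseteq B(x_j,c_0)$, both intersections reduce to the full balls and \eqref{A1} applies directly. Otherwise $r\geqslant c_0-\rho(x,x_j)$, and one constructs an auxiliary point $z$ on a minimizing geodesic from $x$ to $x_j$ such that $B(z,\lambda)\subseteq B(x,r)\cap B(x_j,c_0)$ for some $\lambda=\lambda(c_0)>0$; the bound $\mu(B(x,2r)\cap B(x_j,c_0))\leqslant\mu(B(x_j,c_0))$ is then controlled through \eqref{A1} by a multiple of $\mu(B(z,\lambda))$. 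For the second inequality, $r\leqslant 2c_0$ combined with the same geodesic construction shows that $B(x,r)\cap B(x_j,c_0)$ contains a ball of radius comparable to $r$ (with the proportionality depending on $c_0$), after which \eqref{A1} bounds $\mu(B(x,r))$ by a multiple of the volume of that inner ball. The delicate point is to ensure that the radius $\lambda r$ does not degenerate when $x$ approaches the boundary of $B(x_j,c_0)$; the geodesic construction handles this by always moving toward $x_j$ by a definite fraction of the available space. Since the statement is cited from \cite[Section 4]{PTTS-2004}, I would ultimately refer to the original proof for the explicit choice of constants, while the construction above captures the essential mechanism.
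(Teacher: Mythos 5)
Your construction (maximal $1$-separated set, disjointness of the half-balls, and the volume-counting bound for the overlap number) is exactly the standard Coifman--Weiss-type argument, and it is the one behind the cited result in \cite[Section 4]{PTTS-2004}; the paper itself offers no proof, so your reconstruction of (i)--(iii) is complete and correct. Note only that in (iv) the exponential factor in \eqref{A1} is harmless because every application of doubling occurs at scales bounded by a multiple of $c_0$ (for $r\geqslant 2c_0$ the first inequality is trivial since both intersections equal $B(x_j,c_0)$).

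One step of your sketch for the first inequality in (iv) does not work as written: in the regime where $B(x,2r)\not\subseteq B(x_j,c_0)$ you bound the left-hand side by $\mu(B(x_j,c_0))$ and compare with a ball $B(z,\lambda)$ of \emph{fixed} radius $\lambda(c_0)$. Since in that regime $r$ can still be arbitrarily small (when $\rho(x,x_j)$ is close to $c_0$), no fixed-radius ball fits inside $B(x,r)\cap B(x_j,c_0)$, and comparing $\mu(B(x_j,c_0))$ with $\mu(B(z,\lambda r))$ produces a constant that blows up like $(c_0/r)^m$ as $r\downarrow 0$. The repair is the one you already use for the second inequality: place $z$ on the geodesic from $x$ towards $x_j$ at distance $\min(\rho(x,x_j),r/2)$ from $x$, so that $B(z,r/4)\subseteq B(x,r)\cap B(x_j,c_0)$, and then bound the left-hand side by $\mu(B(x,2r))\leqslant\mu(B(z,3r))\leqslant C\,\mu(B(z,r/4))$ via \eqref{A1}; all scales are now comparable to $r\leqslant 2c_0$ and the constant depends only on $c_0$, $A$ and $m$.
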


For $p\in (2,\infty)$ and $\si\in (A,\infty)$, we intend to find   $C\in (0,\infty)$ depending only on $p,\si, A,m$ and $V_k$ such that 
\begin{align}\label{eqn-Tpg2}
  \big\Vert \, |\T_{\mu,\sigma}^{(k)} (\alpha)|\, \big\Vert_p \leqslant C \Vert \alpha \Vert_p,\quad \aa\in \Omega^{(k)}.
\end{align} 
To this end, let $w$ be a $C^{\infty}$ function on $[0,\infty)$ satisfying $0\leqslant w\leqslant 1$ and
\begin{align*}
  \begin{split}
    w(t)= \left\{
      \begin{array}{ll}
        1 & \text{on} ~ ~ [0, 1/2], \\
        0 & \text{on} ~ ~ [1,\infty) \text{,}
      \end{array}
    \right.
  \end{split}
\end{align*}
and let $\widetilde{\T}_{\mu,\sigma}^{(k)}$ be the operator defined by
\begin{align}\label{eqn-wzT}
\widetilde{\T}_{\mu,\sigma}^{(k)}( \alpha) := \int_{0}^{\infty}v(t)\nabla  P_{t}^{(k)} \alpha \,d t
\end{align}
where $v(t):= {w(t) \e^{-\sigma t} }/{\sqrt{t}}$.
We need the following lemma, which reduces \eqref{eqn-Tpg2} to a time and spatial localized version.

\begin{lemma}\label{lem-tpg2}
  Suppose that  Condition  {\bf (A)} holds. Let $p\in(2,\infty)$ and $\{x_j\}_{j\geqslant 1}$ be as  in Lemma \ref{lem-fop}. If there exists a   constant $c>0$ depending only on $p,\si,A,m$ and $V_k$ such that
  \begin{align}\label{eqn-wTfb}
    \left\Vert\, |\widetilde{\T}_{\mu,\sigma}^{(k)}(\alpha)|\, \right\Vert _{L^p(B(x_j,4))}\leqslant c \Vert\alpha\Vert_{L^p(B(x_j,1))}
  \end{align}
  for any $\alpha\in L^p(\OO^{(k)}, \mu)$, then inequality \eqref{eqn-Tpg2} holds for some constant $C>0$ depending also only on $p,\si,A,m$ and~$V_k$.
\end{lemma}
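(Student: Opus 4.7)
The plan is to split $\T_{\mu,\sigma}^{(k)}\alpha$ into the short-time, spatially localizable piece $\widetilde{\T}_{\mu,\sigma}^{(k)}\alpha$ and a long-time remainder whose $L^p$-norm is controlled by the factor $\e^{-\sigma t}$. Using the subordination identity
\begin{equation*}
(\Delta_\mu^{(k)}+\sigma)^{-1/2}=\pi^{-1/2}\int_0^\infty t^{-1/2}\,\e^{-\sigma t}\,P_t^{(k)}\,\d t,
\end{equation*}
this amounts to writing $\sqrt{\pi}\,\T_{\mu,\sigma}^{(k)}\alpha = \widetilde{\T}_{\mu,\sigma}^{(k)}\alpha + R_\sigma\alpha$, where
\begin{equation*}
R_\sigma\alpha := \int_{0}^{\infty}(1-w(t))\,\frac{\e^{-\sigma t}}{\sqrt{t}}\,\nabla P_t^{(k)}\alpha\,\d t
\end{equation*}
involves only $t\geqslant 1/2$, since $w\equiv 1$ on $[0,1/2]$.

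First I would bound $R_\sigma\alpha$ in $L^p$. The first inequality in \eqref{A3} gives $|\nabla P_t^{(k)}\alpha|\leqslant t^{-1/2}\e^{A+At}(P_t|\alpha|^2)^{1/2}$, and since the scalar heat semigroup $P_t$ is sub-Markov, hence contractive on $L^{p/2}$ for $p>2$, one has $\|(P_t|\alpha|^2)^{1/2}\|_p \leqslant \|\alpha\|_p$. Minkowski's inequality then produces
\begin{equation*}
\|R_\sigma\alpha\|_p\leqslant \e^A\int_{1/2}^{\infty}\frac{\e^{(A-\sigma)t}}{t}\,\d t\cdot\|\alpha\|_p,
\end{equation*}
which is finite as $\sigma>A$.

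Next I would handle the principal piece $\widetilde{\T}_{\mu,\sigma}^{(k)}\alpha$ by applying the hypothesis \eqref{eqn-wTfb} ball by ball and summing over the covering from Lemma \ref{lem-fop}. The balls $B(x_j,4)$ cover $M$ with overlap at most $N_0$, and the inner balls $B(x_j,1)\subset B(x_j,4)$ inherit the same overlap bound. Consequently,
\begin{equation*}
\|\widetilde{\T}_{\mu,\sigma}^{(k)}\alpha\|_p^p \leqslant \sum_{j\geqslant 1}\int_{B(x_j,4)}|\widetilde{\T}_{\mu,\sigma}^{(k)}\alpha|^p\,\d\mu \leqslant c^p\sum_{j\geqslant 1}\int_{B(x_j,1)}|\alpha|^p\,\d\mu \leqslant c^p N_0\,\|\alpha\|_p^p,
\end{equation*}
which yields $\|\widetilde{\T}_{\mu,\sigma}^{(k)}\|_{p\to p}\leqslant c\,N_0^{1/p}$. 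Combined with the bound on $R_\sigma$, this gives the desired global inequality.

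The main technical nuisance, rather than a deep obstacle, is the density argument needed to legitimize both the use of \eqref{A3} and of the subordination identity for an arbitrary $\alpha\in L^p$. I would proceed by first working with $\alpha$ in a dense subspace (for instance smooth compactly supported $k$-forms, which lie in $\OO^{(k)}_{b,1}$), establishing the splitting and the two estimates there, and then extending by continuity using that $P_t^{(k)}$ is bounded on $L^p$ via the Gaussian estimate of Lemma \ref{Gaussian-estimate}.
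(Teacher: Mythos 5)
Your treatment of the long-time remainder $R_\sigma$ is fine and matches the paper's first step. The problem is in the principal piece. You read the hypothesis \eqref{eqn-wTfb} as holding for an arbitrary global $\alpha\in L^p(\OO^{(k)},\mu)$ and apply it directly, ball by ball, to the same $\alpha$. But $\widetilde{\T}_{\mu,\sigma}^{(k)}$ is a non-local operator: a form $\alpha$ supported in $M\setminus B(x_j,1)$ generally produces a nonzero contribution to $\widetilde{\T}_{\mu,\sigma}^{(k)}\alpha$ on $B(x_j,4)$, so an inequality of the form $\|\widetilde{\T}_{\mu,\sigma}^{(k)}\alpha\|_{L^p(B(x_j,4))}\leqslant c\|\alpha\|_{L^p(B(x_j,1))}$ cannot hold for all global $\alpha$ and is never verified in that form; the hypothesis is only established (in Lemmas \ref{theorem-add-2}--\ref{lem-small-radii}) for $\alpha$ supported in $B_j=B(x_j,1)$. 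Under that reading, the middle inequality in your chain $\int_{4B_j}|\widetilde{\T}\alpha|^p\leqslant c^p\int_{B_j}|\alpha|^p$ is simply unavailable, and your argument collapses. Reading the hypothesis globally instead trivializes the lemma, whose entire point is the passage from a localized estimate to a global one.

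The missing content is the localization itself. One must take a partition of unity $\{\varphi_j\}$ subordinate to $\{B_j\}$, write $\widetilde{\T}_{\mu,\sigma}^{(k)}\alpha\leqslant \sum_j\chi_j\widetilde{\T}_{\mu,\sigma}^{(k)}(\alpha\varphi_j)+\sum_j(1-\chi_j)\widetilde{\T}_{\mu,\sigma}^{(k)}(\alpha\varphi_j)=\mathrm{I}+\mathrm{II}$ with $\chi_j=\1_{4B_j}$, apply \eqref{eqn-wTfb} to the compactly supported pieces $\alpha\varphi_j$ to control $\mathrm{I}$ (via the finite overlap $N_0$, either by duality as in the paper or by your direct summation), and then estimate the far-field term $\mathrm{II}$ separately. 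That second estimate is the substantial part of the proof: since $(1-\chi_j)(x)\varphi_j(y)\neq 0$ forces $\rho(x,y)\geqslant 3$, one controls $\mathrm{II}$ by the kernel representation of $\nabla P_t^{(k)}$ together with the weighted $L^p$ Gaussian bounds of Theorem \ref{Lp-estimate} and the volume integral estimate of Lemma \ref{lem3}, which produce the convergent factor $\e^{-c/t}$ for $t\in(0,1]$. None of this appears in your proposal, so the argument has a genuine gap rather than a merely technical one.
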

\begin{proof} In the sequel, $\xi\lesssim \eta$ for two positive variables $\xi$ and $\eta$ means that $\xi \leqslant \kappa \eta$ holds for some constant $\kappa>0$ depending only on $p,\si,A,m$ and $V_k$.

 Since  $w \equiv 1$ on $[0,1/2]$,  if $\sigma >A$,  then  \eqref{A3} implies that for any $\alpha \in L^p(\OO^{(k)},\mu)$,
  $$
  \left\Vert \, \int_{0}^{\infty}(1-w(t))\, |\nabla P^{(k)}_{t}\alpha|\, \frac{\e^{-\sigma t}}{\sqrt{t}} \, \d t\right\Vert _p \lesssim \int_{1/2}^{\infty}\e^{(A-\sigma)t}\frac{1}{t}\d t \,\Vert\alpha\Vert _p \lesssim \Vert\alpha\Vert _p .
  $$
  (Note that the first inequality in condition \eqref{A3} extends to general $\alpha\in L^p(\OO^{(k)},\mu)$
  by a standard approximation argument in $L^p(\mu)$).
  This and \eqref{eqn-wzT} imply that  \eqref{eqn-Tpg2} follows from 
  \begin{align}\label{eqn-wTfb-1}
    \left\Vert |\wz {\T}^{(k)}_{\mu,\sigma}(\alpha)| \right\Vert _p \lesssim \Vert \alpha \Vert _p,\quad \alpha\in L^p(\OO^{(k)}, \mu).
  \end{align}

  Let $\{x_j\}_{j\geqslant 1}$ be as  in Lemma \ref{lem-fop}  and $\{\varphi_j\} $ be a subordinated $C^{\infty}$ partition of the unity such that $0\leqslant\varphi_j\leqslant 1$ and $\varphi_j$ is supported in $B_j := B(x_j,1)$. For each $j$, denote the characteristic function of the ball $4B_j:=B(x_j,4)$ by $\chi_j$.
  For any $\alpha\in L^p(\OO^{(k)}, \mu)$ and $x\in M$, we then may write
  \begin{align}\label{eqn-wztq}
    \widetilde{\T}_{\mu,\sigma}^{(k)} \alpha(x)\leqslant \sum_{j\geqslant 1}\chi_j\widetilde{\T}_{\mu,\sigma}^{(k)}(\alpha\varphi_j)(x)+\sum_{j\geqslant 1}(1-\chi_j)\widetilde{\T}_{\mu,\sigma}^{(k)}(\alpha\varphi_j)(x)=: \text{I}(x)+\text{II}(x).
  \end{align}
  By Lemma \ref{lem-fop}, we know
  $$
  \sum_{j\geqslant 1}\left|(1-\chi_j)(x)\varphi_j(y)\right|\leqslant N_0 \mathbbm{1}_{\{\rho(x,y)\geqslant 3\}}.
  $$
  First note by Lemma \ref{lem3}, along with the volume
doubling property \eqref{A1}, there exists $C\colon (0,\infty)\to(0,\infty)$ depending only on $A$ and $m$  such that
  \begin{align}\label{GaussianTerm}
  \int_{\{\rho(x,y)\geqslant 3\}} \frac{\e^{-{ \gamma \rho^2(x,y)}/{t}}}{\mu(y,\sqrt{t})}\, \mu( \d y) \leqslant C_\gg\e^{-\ff1{C_\gg t}},\quad t\in (0,1],\ \gg>0,\ x\in M.
  \end{align}
By this and H\"older's inequality, we find $h_1,c\colon (0,\infty)\to (0,\infty)$ depending only on $p$, $A$ and $m$ such that 
  \[\begin{aligned}
    \text{II}(x) & \leqslant \int_{0}^{1} v(t) \int _M
    \left| \nabla _x p_{t}^{(k)}(x,y) \right|
    \left( \sum _{j \in \Lambda } \left|(1- \chi _j )(x) \varphi _j(y)\right| \right)
     | \alpha(y)| \, \mu(\d y)\d t \\
     & \leqslant N_0 \int_{0}^{1}\frac{1}{\sqrt{t}}\int _{\{\rho(x,y)\geqslant3\}}\left|\nabla_x p_{t}^{(k)}(x,y)\right|\cdot |\alpha (y)|\, \mu (\d y) \d t \\
     & \leqslant  N_0 \int_{0}^{1}\frac{1}{\sqrt{t}}\int _{\{\rho(x,y)\geqslant3\}}\left|\nabla_x p_{t}^{(k)}(x,y)\right|\,\e^{\gamma \rho^2 (x,y)/pt}{\mu(y,\sqrt{t})}^{(p-1)/p}\, |\alpha (y)|\,
\frac{\e^{-\gamma \rho^2 (x,y)/pt}}{\mu(y,\sqrt{t})^{(p-1)/p}}
     \, \mu (\d y) \d t \\
     & \leqslant  h_1(\gg)  \int_{0}^{1}\left( \int _M \left|\sqrt{t}\nabla _x p_{t}^{(k)}(x,y)\right| ^p \e^{\gamma \rho^2 (x,y)/t} \left(\mu(y,\sqrt{t})\right) ^{p-1} |\alpha(y)| ^p \mu (\d y)\right) ^{1/p} \frac{\e^{-c_{\gg}/t}}{t}\,\d t.
  \end{aligned}\]
By Theorem \ref{Lp-estimate}, there exists $h_2\colon (0,1/8)\to (0,\infty)$ depending only on $p, A,m$ and $V_k$ such that
$$
  \int_M\left|\sqrt{t}\, |\nabla_x p_{t}^{(k)}(x,y)|\right| ^p \e^{  \frac{\gg\rho^2 (x,y)}{t}}\mu (\d x) \leqslant    \frac{h_2(\gg)}{\left(\mu(y,\sqrt{t})\right) ^{p-1}},\quad 0<\gg<1/8.
  $$
Taking for instance $\gg= 1/16$, we find constants $c_0,c_1,c_2,c_3 \in (0,\infty)$ depending only on $p,A,m$ and $V_k$ such that
  \begin{equation}\begin{split} \label{eqn-wztqp}
    &\int_M |\text{II}(x)|^p \mu (\d x) \\
    &\leqslant c_1  \int_M \left(\int_{0}^{1} \left(\int_M |\sqrt{t}\nabla _x p_{t}^{(k)}(x,y)|^p \e^{\gamma\rho^2(x,y)/t} \mu\big(y,\sqrt{t}\big)^{p-1}|\alpha(y)|^p \mu (\d y) \right) ^{1/p} \frac{\e^{-c_0/t}}{t} \d t \right) ^p \mu(\d x)  \\
     &\leqslant c_2 \int_{0}^{1} \left(\int_M \lf(\mu\big(y,\sqrt{t}\big)\r)^{p-1}|\alpha(y)|^p \left(\int_M \left|\sqrt{t}\nabla_x p_{t}^{(k)}(x,y)\right|^p \e^{\gamma \rho^2 (x,y)/t}\mu (\d x)\right) \mu (\d y)\right) \, \d t  \\
    &\leqslant c_3 \int_M |\alpha (y) |^p \mu (\d y). \end{split} 
  \end{equation}

  Next we turn to the estimate of $\text{I}(x)$. According to Lemma \ref{lem-fop}, the balls $\{4B_j\}_{j \in \Lambda}$ form a unity overlap and hence
$$
\sum_{j} \Vert\rho\chi _j \Vert _{p/(p-1)}^{p/(p-1)} \lesssim \Vert \rho\Vert_{p/(p-1)}^{p/(p-1)},
\quad \rho\in C_0^{\infty}(M).
$$
  Combined with assumption \eqref{eqn-wTfb}, since $|\alpha |\varphi _j \in C_0^{\infty}(B(x_j,1))$,
we conclude that
  \begin{align*}
    \left|\int_M \rho(x)|\text{I}(x)|\,\mu (\d x)\right| & \leqslant \int_M |\rho(x)| \, \bigg|\sum_j \chi_j \wz {\T}^{(k)}_{\mu,\sigma}(\alpha\varphi_j)(x)\bigg| \, \mu(\d x) \\
    & \lesssim \sum _j \big\Vert  \, |\alpha | \, \varphi _j \big\Vert _p \,\Vert \rho\chi_j \Vert _{p/(p-1)} \lesssim \Vert\alpha\Vert _p \Vert\rho\Vert_{p/(p-1)}.
  \end{align*}
This together with \eqref{eqn-wztq} and \eqref{eqn-wztqp} implies \eqref{eqn-Tpg2}, and concludes the proof.
\end{proof}

In the sequel, we continue to write
$B_j:=B(x_j,1)$ for simplicity.  
By Lemma \ref{lem-tpg2}, it suffices to verify \eqref{eqn-wTfb}. To this end, we use the local
$L^p$ boundedness criterion  via maximal functions from \cite{PTTS-2004}.
More precisely, we define the \textit{local maximal function}  by
\begin{align}\label{eqn-localMF}
  (\SM_{\loc}f)(x):=\sup_{\substack{x\in B \\
  r(B)\leqslant 32}}\frac{1}{\mu(B)}\int_B f\,\d\mu,\quad x\in M,
\end{align}
for any locally integrable function $f$ on $M$;
the supremum is taken over all balls $B$ in $M$, containing $x$ and
of radius at most 32. From \eqref{A1},
it follows that $\SM_{\loc}$ is bounded on $L^p(\mu)$ for all
$1<p\leqslant \infty$.  For a measurable subset $E\subset M$, the
\textit{maximal function relative to $E$\/} is defined as
\begin{align}\label{eqn-relaMF}
  (\SM_Ef )(x):=\sup_{B\text{ ball in }  M,\, x\in B} \frac{1}{\mu(B\cap E)}\int_{B\cap E}f\,\d\mu,
  \quad x\in E,
\end{align}
for any locally integrable function $f$ on $M$. If in particular $E$
is a ball of radius $r$, it is enough to consider balls $B$ with
radii not exceeding $2r$. It is also easy to see $\SM_E$ is weak
type $(1,1)$ and $L^p(\mu)$-bounded for $1<p\leqslant \infty$ if $E$
satisfies the {\it relative doubling property}, namely, if there
exists a constant $C_E$ (called \textit{relative doubling constant of} $E$)
such that for $x\in E$ and $r>0$,
\begin{align}\label{RD}
  \mu(B(x,2r)\cap E)\leqslant C_E\,\mu(B(x,r)\cap E).
\end{align}
Note that by Lemma \ref{lem-fop} (iv), for any $j\in \Lambda$, in particular 
the subsets $4B_j$ satisfy the relative doubling property \eqref{RD}
with a relative doubling constant independent of $j$.\smallskip

The following lemma will be crucial in the
proof of Theorem \ref{main-them1}. For any $x\in M$, let $\scr B(x)$ be  the class of geodesic balls in $M$ containing $x$.

\begin{lemma}\label{theorem-add-2}
  Let $p\in (2,\infty)$ and assume \eqref{A1}. Then  \eqref{eqn-wTfb} holds for some   constant $c>0$ depending only on $p,\si,A,m$ and $V_k$, provided  there exist  an integer $n$ and  a constant $C>0$ depending only on $p,\si,A,m$ and $V_k$ such that the following two items hold: 
  \begin{enumerate}\item[\rm(i)]  the operator
  \begin{align*}
      \SM^{\#}_{4B_j, \widetilde{\T}^{(k)}_{\mu, \sigma},\, n}\alpha(x):=\dsup_{B\in \scr B(x)}\lf(\frac{1}{\mu(B\cap 4B_j)}\dint_{B\cap 4B_j}
      \lf|\widetilde{\T}^{(k)}_{\mu, \sigma} (I-P^{(k)}_{r^2})^n\alpha(y)\r|^2\,\mu(\d y)\r)^{1/2} 
    \end{align*} for $x\in 4 B_j$ satisfies  
    $$\big\|\SM^{\#}_{4B_j, \widetilde{\T}^{(k)}_{\mu, \sigma}, n}\alpha\big\|_{L^p(4B_j,\mu)}\leqslant C \|\aa\|_{L^p(\OO^{(k)}(B_j),\mu)},\ \ j\geqslant 1.$$ 
  \item[\rm(ii)] for any $\ell \in \{1,2,\ldots,n\}, j\geqslant 1$, and any
    $\alpha \in L^p(\OO^{(k)}(B_j),\mu),$  there exists a sublinear operator~$S_j$
    bounded from $L^p(\OO^{(k)}(B_j), \mu)$ to $L^p(4B_j,\mu)$ with 
    $$\|S_j\|_{L^p(\OO^{(k)}(B_j),\, \mu)\to  L^p(4B_j,\,\mu)}^{\mstrut}\leqslant C,$$
      such that 
    \begin{equation}\label{eq2}\begin{split} 
      \sup_{B\in \scr B(x)} &\left(\frac{1}{\mu(B\cap 4B_j)}\int_{B\cap 4B_j} \big| \widetilde{\T}^{(k)}_{\mu, \sigma} (P^{(k)}_{ \ell r^2}\alpha)\big|^p\,\d\mu\right)^{1/p}\\
      &\quad\leqslant C\left(\SM_{4B_j}(| \widetilde{\T}^{(k)}_{\mu, \sigma} \alpha |^2)+\big(S_j( \alpha)\big) ^2 \right)^{1/2}(x),\ \ j\geqslant 1,\ x\in  4B_j.\end{split}
    \end{equation} 
  \end{enumerate}
   
\end{lemma}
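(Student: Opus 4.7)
The statement is a localized version, on the relatively-doubling ball $4B_j$, of the sharp-maximal-function $L^p$ boundedness criterion of Auscher--Coulhon--Duong--Hofmann~\cite{PTTS-2004}. By Lemma~\ref{lem-fop}(iii)--(iv), every $4B_j$ satisfies the relative doubling property \eqref{RD} with constant independent of $j$; consequently the truncated maximal operator $\SM_{4B_j}$ is of weak type $(1,1)$ and bounded on $L^q(4B_j,\mu)$ for every $q\in(1,\infty]$, and the Fefferman--Stein good-$\lambda$ inequality on $4B_j$ holds, all with constants uniform in $j$. Since $p>2$, we also have $p/2>1$, so $\SM_{4B_j}$ is bounded on $L^{p/2}(4B_j,\mu)$.

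The plan is a three-step ACDH-type argument. \emph{First}, for fixed $x\in 4B_j$ and $B\in\scr B(x)$ of radius $r$, use the binomial identity
\begin{equation*}
I=(I-P^{(k)}_{r^2})^n+\sum_{\ell=1}^n(-1)^{\ell+1}\binom{n}{\ell}P^{(k)}_{\ell r^2}
\end{equation*}
to split $\widetilde{\T}^{(k)}_{\mu,\sigma}\alpha$ into a ``high-frequency'' piece $\widetilde{\T}^{(k)}_{\mu,\sigma}(I-P^{(k)}_{r^2})^n\alpha$ and a sum of ``low-frequency'' pieces $\widetilde{\T}^{(k)}_{\mu,\sigma}P^{(k)}_{\ell r^2}\alpha$ for $1\leqslant\ell\leqslant n$. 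Taking the $L^2$-average over $B\cap 4B_j$, the triangle inequality together with hypothesis (ii) (combined, for $p\geqslant 2$, with H\"older to pass from the $L^p$-average to the $L^2$-average) yields the pointwise bound
\begin{equation*}
\sup_{B\in\scr B(x)}\Bigl(\tfrac{1}{\mu(B\cap 4B_j)}\!\int_{B\cap 4B_j}\!|\widetilde{\T}^{(k)}_{\mu,\sigma}\alpha|^2\,d\mu\Bigr)^{\!1/2}\!\leqslant\SM^{\#}_{4B_j,\widetilde{\T}^{(k)}_{\mu,\sigma},n}\alpha(x)+C\bigl(\SM_{4B_j}(|\widetilde{\T}^{(k)}_{\mu,\sigma}\alpha|^2)+(S_j\alpha)^2\bigr)^{\!1/2}(x).
\end{equation*}

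\emph{Second}, invoke the ACDH good-$\lambda$ machinery on $4B_j$: raise the preceding inequality to the $p$-th power and integrate against the distribution function, using the $L^{p/2}$-boundedness of $\SM_{4B_j}$ and an $\varepsilon$-good-$\lambda$ formulation so that the term $(\SM_{4B_j}|\widetilde{\T}^{(k)}_{\mu,\sigma}\alpha|^2)^{1/2}$ on the right can be absorbed into $\|\widetilde{\T}^{(k)}_{\mu,\sigma}\alpha\|_{L^p(4B_j)}$ on the left. The outcome is
\begin{equation*}
\|\widetilde{\T}^{(k)}_{\mu,\sigma}\alpha\|_{L^p(4B_j)}\leqslant C\bigl(\|\SM^{\#}_{4B_j,\widetilde{\T}^{(k)}_{\mu,\sigma},n}\alpha\|_{L^p(4B_j)}+\|S_j\alpha\|_{L^p(4B_j)}\bigr).
\end{equation*}
\emph{Third}, apply hypothesis (i) to the first right-hand term and the assumed boundedness $S_j\colon L^p(\OO^{(k)}(B_j),\mu)\to L^p(4B_j,\mu)$ to the second, to conclude the desired bound \eqref{eqn-wTfb} with a constant depending only on $p,\sigma,A,m$ and $V_k$.

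The main obstacle is the absorption step: the quantity $(\SM_{4B_j}|\widetilde{\T}^{(k)}_{\mu,\sigma}\alpha|^2)^{1/2}$ has $L^p$-norm comparable to $\|\widetilde{\T}^{(k)}_{\mu,\sigma}\alpha\|_{L^p(4B_j)}$ itself, so it cannot be absorbed by a naive constant; the standard resolution is to pass through the level-set formulation and to pick the parameter in the good-$\lambda$ inequality small enough to buy a factor strictly less than one. Uniformity in $j$ of all arising constants is forced throughout by Lemma~\ref{lem-fop}(iv), which is precisely what makes the \emph{local} criterion aggregate into the \emph{global} bound needed in Lemma~\ref{lem-tpg2}.
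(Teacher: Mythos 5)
Your proposal is correct and follows essentially the same route as the paper: the paper simply invokes \cite[Theorem~2.4]{PTTS-2004} with $E_1=B_j$, $E_2=4B_j$ and $I-A_r=(I-P^{(k)}_{r^2})^n$, relying on the $j$-uniform relative doubling of $4B_j$ from Lemma~\ref{lem-fop}, and then concludes from hypotheses (i) and (ii) exactly as you do. The only difference is that you sketch the internal good-$\lambda$ mechanism of that theorem (correctly identifying that the absorption of $\SM_{4B_j}(|\widetilde{\T}^{(k)}_{\mu,\sigma}\alpha|^2)^{1/2}$ must go through the level-set formulation rather than a naive triangle inequality), whereas the paper treats it as a black box.
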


\begin{proof}
 We use \cite[Theorem 2.4]{PTTS-2004}:
 First note that we may take $B_j$ and $4B_j$ for $E_1$ and $E_2$ there, 
 respectively, as
the sets $B_j$ and $4B_j$ possess the relative volume doubling property
\eqref{RD} with relative doubling constants independent of $j$
(see Lemma \ref{lem-fop}). As in  \cite{PTTS-2004} consider the operators $\{A_r\}_{r>0}$ given
by the relation
\begin{align*}
  I-A_r=\big(I-P^{(k)}_{r^2}\big)^n,\quad r>0,
\end{align*}
for some integer $n$ (to be chosen later).
Following the proof of \cite[Theorem 2.4]{PTTS-2004}, replacing $f\in L^p(B_j,\mu)$ by $\alpha \in L^p(\OO^{(k)}(B_j),\mu)$, we find a constant $C'>0$ depending only on $p, \si,A,m$ and $V_k$ such that 
\begin{align*}
\| \SM_{4B_j} (|\widetilde{\T}^{(k)}_{\mu, \sigma}\alpha|^2)^{1/2} \|_{L^p(4B_j)}\leqslant C'\lf( \Big \|\SM^{\sharp}_{4B_j, \, \widetilde{\T}^{(k)}_{\mu, \sigma}, \, n} \alpha \Big \|_{L^p(4B_j)} +\| S_j(\alpha) \|_{L^p(4B_j)}
+\|\, \alpha\,\|_{L^p(4B_j)} \r).
\end{align*}
Thus, assuming $L^p$-boundedness of both $\SM^{\#}_{4B_j,\,\widetilde{\T}^{(k)}_{\mu, \sigma},\, n}$ and $S_j$, we may conclude that
$\SM_{4B_j} (|\widetilde{\T}^{(k)}_{\mu, \sigma}\alpha|^2)^{1/2} $  is bounded in $L^p(4B_j,\mu)$ and thus $\widetilde{\T}^{(k)}_{\mu, \sigma}$ bounded from $L^p(\OO^{(k)}(B_j),\, \mu)$ to $L^p(\OO^{(k)}(4B_j),\, \mu)$.
\end{proof}

Hence it suffices to check (i) and (ii) of Lemma \ref{theorem-add-2}. We
establish two technical lemmas which verify (i) and (ii)
respectively. To this end, observe that \eqref{A1} implies: for
any $r_0>0$ there exists $C_{r_0}>0$ depending only on $A,m$ and $r_0$ such that  
\begin{align}\label{ExpGrowth}
  \mu(x,2r)\leqslant C_{r_0} \mu(x,r),\ \ r\in (0,r_0),\ x\in M.
\end{align}
An immediate consequence of \eqref{A1} is that for all $y\in M$,
$0<r<8$ and $s\geqslant 1$ satisfying $sr<32$,
\begin{align}\label{local-doubling}
  \mu(y,sr)\leqslant Cs^{m} \mu(y,r),
\end{align}
for some constants $C$ depending only on $A$.\smallskip

The following lemma is essential to the proof of part (i) of Lemma
\ref{theorem-add-2}.

\begin{lemma}\label{lem-term1}
  Assume condition {\bf (A)}.  Then
  there exists an integer $n$ depending only on $m$  and a constant $C>0$ depending on $\si, A,m$ and $V_k$, such that 
  \begin{align}\label{eq-Hess-1}
    \dsup_{B\in  \scr B(x)}\lf(\frac{1}{\mu(B\cap 4B_j)}\dint_{B\cap 4B_j}
    \lf|\wz{\T}^{(k)}_{\mu, \sigma}(I-P^{(k)}_{r^2})^n\alpha(y)\r|^2\,\mu(\d y)\r)^{1/2}
    \leqslant C \left(\SM_{\loc}(|\alpha|^2)(x)\right)^{1/2} 
  \end{align}  holds for any $x\in 4B_j,~j\geqslant 1$ and $\alpha \in L^2(\OO^{(k)}(4B_j), \mu)$
  where $\SM_{\loc}$ is defined by \eqref{eqn-localMF}.
\end{lemma}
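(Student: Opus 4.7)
}
The plan is to implement the local/non-local decomposition argument of \cite{PTTS-2004}, suitably adapted to the present weighted Riemannian setting for $k$-forms. Two ingredients will be decisive: an $L^2\to L^2$ bound for $\widetilde{\T}^{(k)}_{\mu,\sigma}$ (for the local part of $\alpha$), together with the Davies--Gaffney off-diagonal estimates for $\sqrt{t}\,\nabla P^{(k)}_t$ from Lemma \ref{lem-Gaffney} (for the non-local part). The binomial cancellation in $(I-P^{(k)}_{s^2})^n$ will supply the additional decay needed to sum over dyadic annuli.

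First I would establish $L^2$-boundedness of $\widetilde{\T}^{(k)}_{\mu,\sigma}$ for $\sigma$ sufficiently large. The Weitzenb\"ock identity $\Delta^{(k)}_\mu = \nabla_\mu^*\nabla + \mathscr{R}^{(k)}_h$ combined with the Kato-class form inequality \eqref{V-ineq} yields
\[
(1-\kappa)\|\nabla\eta\|_2^2 \leq \|(\Delta^{(k)}_\mu)^{1/2}\eta\|_2^2 + c_1\|\eta\|_2^2,
\]
and taking $\eta=(\Delta^{(k)}_\mu+\sigma)^{-1/2}\alpha$ gives $L^2$-boundedness of the untruncated Riesz transform $\T^{(k)}_{\mu,\sigma}$. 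The truncated operator $\widetilde{\T}^{(k)}_{\mu,\sigma}$ differs from a constant multiple of $\T^{(k)}_{\mu,\sigma}$ only by an integral over $[1/2,\infty)$, which is manifestly $L^2$-bounded through the first bound in \eqref{A3} once $\sigma>A$.

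Next, fix $x\in 4B_j$ and $B=B(y_0,s)\in\mathscr{B}(x)$. Since $B\cap 4B_j\subset 4B_j$ has bounded diameter, I may reduce to $s$ bounded by an absolute constant (otherwise $B\cap 4B_j=4B_j$ and the sup-average is trivially uniform). Split $\alpha=\alpha_1+\alpha_2$ with $\alpha_1=\alpha\mathbbm{1}_{4B}$. For the local piece, combining $L^2$-boundedness of $\widetilde{\T}^{(k)}_{\mu,\sigma}(I-P^{(k)}_{s^2})^n$ with the relative doubling property of Lemma \ref{lem-fop}(iv) yields
\[
\frac{1}{\mu(B\cap 4B_j)}\int_{B\cap 4B_j} \big|\widetilde{\T}^{(k)}_{\mu,\sigma}(I-P^{(k)}_{s^2})^n\alpha_1\big|^2\,\d\mu
\leq \frac{C}{\mu(4B)}\int_{4B}|\alpha|^2\,\d\mu
\leq C\,\SM_{\loc}(|\alpha|^2)(x).
\]
For the non-local piece, expanding $(I-P^{(k)}_{s^2})^n = \sum_{l=0}^n\binom{n}{l}(-1)^l P^{(k)}_{ls^2}$ gives
\[
\widetilde{\T}^{(k)}_{\mu,\sigma}P^{(k)}_{ls^2}\alpha_2 = \int_0^1 v(t)\,\nabla P^{(k)}_{t+ls^2}\alpha_2\,\d t,
\]
and I would decompose $\alpha_2$ annularly as $\sum_k\alpha\mathbbm{1}_{A_k}$ with $A_k=(2^{k+1}B\setminus 2^kB)\cap 4B_j$; only finitely many $A_k$ are nonempty because the support of $\alpha$ lies in $4B_j$. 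Applying Lemma \ref{lem-Gaffney} on each annulus supplies an off-diagonal factor $\e^{-c(2^ks)^2/(t+ls^2)}$, and combining it with the local doubling bound $\mu(A_k)\leq C 2^{km}\mu(B)$ and the relative doubling $\mu(B)\leq C\mu(B\cap 4B_j)$ reduces the non-local contribution to a finite sum dominated by $\SM_{\loc}(|\alpha|^2)(x)$.

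The main obstacle will be extracting sufficient decay to ensure summability over the annular index $k$ (and over $l$). Davies--Gaffney alone gives $\e^{-c(2^ks)^2/(t+ls^2)}$, which for $l=0$ and moderate $k$ (where $t\sim(2^ks)^2$) provides no genuine smallness. It is precisely the binomial cancellation in $(I-P^{(k)}_{s^2})^n$, interpreted spectrally via $(1-\e^{-s^2\lambda})^n\sim(s^2\lambda)^n$ at low frequency, that forces extra decay of order $(s^2/t)^n$ on the relevant part of the resolvent kernel. Choosing $n$ large enough relative to the doubling dimension $m$ (specifically $2n>m$) balances this saving against the volume growth $2^{km}$ over dyadic annuli and yields the stated estimate; the bookkeeping across the time regimes $t<s^2$ and $t\geq s^2$ is routine once this saving is identified.
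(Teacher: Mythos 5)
Your proposal follows essentially the same route as the paper's proof: reduction from averages over $B\cap 4B_j$ to averages over $B$ via the relative doubling property, treatment of the piece supported in $4B$ by $L^2$-boundedness of $\widetilde{\T}^{(k)}_{\mu,\sigma}(I-P^{(k)}_{r^2})^n$, and control of the annular pieces by combining the Davies--Gaffney bounds of Lemma \ref{lem-Gaffney} with the extra decay of order $(r^2/t)^n$ produced by the binomial cancellation, summed over dyadic annuli using local doubling with $n$ chosen large relative to $m$. The paper makes the cancellation concrete by collecting the time-shifted kernels into the single function $g_r(t)$ and bounding $|g_r(t)|\lesssim r^{2n}t^{-n-1/2}$ (rather than your spectral heuristic), but this is exactly the mechanism you identify, so the two arguments coincide in substance.
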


\begin{proof} All constants appearing below depend only on $\si, A,m$ and $V_k$, and $\xi\ls \eta  $ for positive variables $\xi$ and $\eta$ means that $\xi\leqslant \kappa\eta$ holds for such a constant $\kappa>0$. 

  Viewing the left-hand side of \eqref{eq-Hess-1} as maximal
  function relative to $4B_j$, since the radius of $4B_j$ is $4$, it
  is sufficient to consider balls $B$ of radii not exceeding~$8$.    By Lemma \ref{lem-fop}, there exists a constant $c_0>0$ depending only on $A,m$ such that 
  \begin{align}\label{B-control}
    \mu(B) \leqslant c_0 \mu(B\cap 4B_j),\ \ B=B(x_0,r),\ x_0\in 4 B_j,\ r\in (0,8),\ j\geqslant 1. 
  \end{align}
    Hence,   
  \begin{align*} & \frac{1}{\mu(B\cap 4B_j)}\int_{B\cap 4B_j} \big|\wz{\T}^{(k)}_{\mu, \sigma}(I-P^{(k)}_{r^2})^n \alpha\big|^2\,\d\mu\leqslant    \frac{c_0}{\mu(B)}\int_{B}\big|\wz{\T}^{(k)}_{\mu, \sigma}(I-P^{(k)}_{r^2})^n\alpha \big|^2\,\d\mu,\\
   & \ j\geqslant 1,\ B=B(x_0,r),\ x_0\in 4 B_j,\  r\in (0,8). 
  \end{align*}
  Thus, we only need to show that
  \begin{align}\label{eqn-lem-term1-1}
    \dsup_{\stackrel{B=B(x_0,r)\in \scr B(x)}{r<8}} \frac{1}{\mu(B)}\dint_{B}
    \lf|\wz{\T}^{(k)}_{\mu, \sigma}(I-P^{(k)}_{r^2})^n\alpha(y)\r|^2\,\mu(\d y) 
    \ls   \SM_{\loc}(|\alpha|^2)(x),\ \ j\geqslant 1,\ x\in 4 B_j.
  \end{align}
For any  $r\in (0,8)$, we may choose $i_r\in \Z_+$
  satisfying
  \begin{align}\label{def-ir}
    2^{i_r}r\leqslant 8< 2^{i_r+1}r.
  \end{align}
  Denote by
  \begin{align}\label{C-B}
    &\mathcal{D}_i:=  (2^{i+1}B)\setminus (2^iB)\quad \text{if}\  i\geqslant 2,\quad\text{ and\/}\notag \\ &\mathcal{D}_1=4B.
  \end{align}
  Using the fact that $\supp \alpha \subset 4B_j\subset 2^i B$ when $i>i_r$, we
  find that
  \begin{align*}
    \alpha=\dsum_{i=1}^{i_r} \alpha \mathbbm{1}_{\mathcal{D}_i}=:\dsum_{i=1}^{i_r} \alpha_i
  \end{align*}
  which then implies
  \begin{align}\label{eqn-lem-term1-2}
    \lf\|\,\big| \wz{\T}^{(k)}_{\mu, \sigma}(I-P^{(k)}_{r^2})^n\alpha\big|\,\r\|_{L^2(B)}\leqslant \dsum_{i=1}^{i_r}\lf\|\,\big|\wz{\T}^{(k)}_{\mu, \sigma}(I-P^{(k)}_{r^2})^n\alpha_i\big|\,\r\|_{L^2(B)}.
  \end{align}
  For $i=1$ we use the $L^2$-boundedness of $\wz{\T}^{(k)}_{\mu, \sigma}\left(I-P^{(k)}_{r^2}\right)^n$ to
  obtain
  \begin{align}\label{eqn-lem-term1-3}
    \lf\|\,\big|\wz{\T}^{(k)}_{\mu, \sigma}(I-P^{(k)}_{r^2})^n\alpha_1\big|\,\r\|_{L^2(B)}\leqslant \|\alpha\|_{L^2(4B)}\leqslant \mu(4B)^{1/2}(\SM_{\loc}(|\alpha|^2)(x))^{1/2}
  \end{align}
  as desired.  For $i\geqslant 2$, we infer from \eqref{eqn-wzT} that
  \begin{align*}
    \wz{\T}^{(k)}_{\mu, \sigma}\left(I-P^{(k)}_{r^2}\right)^n \alpha_i
    &=\int_0^{\infty}v(t)\nabla \left(P^{(k)}_t(I-P^{(k)}_{r^2})^n \alpha_i \right)\,\d t\\
    &=\dint_0^\fz v(t)\dsum_{\ell=0}^n \binom n\ell (-1)^{\ell} \nabla P^{(k)}_{t+\ell r^2}\alpha_i\,\d t \\
    &=\dint_0^\fz \lf(\dsum_{\ell=0}^n \binom n \ell (-1)^{\ell} \mathbbm{1}_{\{t>\ell r^2\}} v(t-\ell r^2)\r)\nabla\, P^{(k)}_t\alpha_i\,\d t\\
    &=\int_0^{\infty}g_r(t)\,\nabla P^{(k)}_t\alpha_i \, \d t,
  \end{align*}
  where
  $$g_r(t):=\dsum_{\ell=0}^n \binom n \ell  (-1)^{\ell} \mathbbm{1}_{\{t>\ell r^2\}} v(t-\ell r^2).$$
  For $g_r$, according to the definition $v(t)={w(t)}\e^{-\sigma t}/{\sqrt{t}}$
  along with
  an elementary calculation (see the proof of \cite[Lemma 3.1]{PTTS-2004}), we observe that
  \begin{align*}
    \begin{cases}
      |g_r(t)|\ls \frac{1}{\sqrt{t-\ell r^2}}, \quad  &\mbox{for}\ \  0<\ell r^2<t \leqslant \ (1+\ell)r^2\leqslant(1+n)r^2,\\
      |g_r(t)| \ls r^{2n} t^{-n-\frac{1}{2}},\quad  &\mbox{for}\ \   (1+nr^2)\wedge (1+n)r^2<t \leqslant 1+nr^2,\\
      g_r(t)=0,\quad & \mbox{for}\ \ t > 1+nr^2.
    \end{cases}
  \end{align*}
Combined with
  Lemma \ref{lem-Gaffney}, this gives
  \begin{align*}
    \lf\|\,\big|\wz{\T}^{(k)}_{\mu, \sigma}(I-P^{(k)}_{r^2})^n\alpha_i\big|\,\r\|_{L^2(B)}\ls \left(\int_0^{\infty}|g_r(t)|\left(1+\sqrt{t}\right)\,\e^{-{c_2 4^ir^2}/{t}}\,\frac{ \d t}{\sqrt{t}}\right)\|\alpha_i\|_{L^2(\mathcal{D}_i)}
  \end{align*}
for some constant $c_2$ from  \eqref{Gaffney-ineq},  where by the fact that $0<r<8$, we have
  \begin{align*}
    &\int_0^{\infty}\left(1+\sqrt{t}\right)|g_r(t)|\,\e^{-{c_2 4^ir^2}/{t}}\,\frac{\d t}{\sqrt{t}}\leqslant C_n  \int_{0}^{1+n r^2}|g_r(t)|\, \e^{-{c_2 4^i r^2}/{t}}\frac{\d t}{\sqrt{t}}\leqslant C_n'4^{-in},
  \end{align*}
  for some constant $C_n'>0$.
Now, since $r(2^iB)\leqslant 8$ when
  $1\leqslant i\leqslant i_r$, an easy consequence of the local doubling
  \eqref{local-doubling} is that
  \begin{align*}
    \mu(2^{i+1}B)\leqslant C2^{(i+1)m}\mu(B),
  \end{align*}
  with a constant $C$  independent of $B$ and
  $i$. Therefore, as $\mathcal{D}_i\subset 2^{i+1}B$,
  \begin{align*}
    \|\alpha_i\|_{L^2(\mathcal{D}_i)}\leqslant \mu(2^{i+1}B)^{1/2}\left(\SM_{\loc}(|\alpha|^2)(x)\right)^{1/2}\leqslant C2^{im/2}\mu(B)^{1/2}\lf(\SM_{\loc}(|\alpha|^2)(x)\r)^{1/2}.
  \end{align*}
  Using the definition of $i_r$, $r\leqslant 8$, and then choosing
  $2n>m/2$, we finally obtain
  \begin{align*}
    &\left\|\,\big|\wz{\T}^{(k)}_{\mu, \sigma}(I-P^{(k)}_{r^2})^n\alpha \big |\,\right\|_{L^2(B)}\leqslant C'\left(\sum_{i=1}^{i_r}2^{i(m/2-2n)}\right)\mu(B)^{1/2}\lf(\SM_{\loc}(|\alpha|^2)(x)\r)^{1/2},
  \end{align*} for some constant $C'>0$, so that \eqref{eqn-lem-term1-1} holds. Then 
 the proof is finished. 
\end{proof}

The following lemma is used to prove part (ii) of Lemma
\ref{theorem-add-2}.

\begin{lemma}\label{lem-small-radii}
In the situation of Theorem \ref{main-them1},  let the integer $i_r$ be defined by
  \eqref{def-ir}, and let $n\in \mathbb{N}$ be as in Lemma
  \textup{\ref{lem-term1}}. Then there exist  constants $c,C>0$ depending only on $p,\si, A,m$ and the Kato data of $V_k$, such that for
    any $i\geqslant 1, \ell \in \{1,\ldots, n\}, r\in (0,8)$,  $B=B(x_0,r)\in \scr B (x)$, and for  $\alpha\in L^2(\OO^{(k)},\mu)$ supported in $\mathcal{D}_i$ as in \eqref{C-B}, 
  \begin{align}
    &\left(\frac{1}{\mu(B)}\int_B|\nabla P^{(k)}_{\ell r^2} \alpha|^p\,\d\mu\right)^{1/p}\leqslant   \frac{C\e^{C \ell r^2-c 4^i}}{r} \left(\frac{1}{\mu(2^{i+1}B)}\int_{\mathcal{D}_i}|\alpha|^2\,\d\mu\right)^{1 /2},\   \label{Hess-f-esti2}
  \end{align} 
 for  $\alpha\in L^2(\OO^{(k)},\mu)$ supported in $2^{i_r+2}B$, 
  \begin{equation} \label{Hess-f-esti1} 
   \left(\frac{1}{\mu(B)}\int_B\big| \nabla P^{(k)}_{\ell r^2} \big(\alpha\big) \big|^p\,\d\mu\right)^{1/p}\leqslant C\e^{C\ell r^2} \sum_{i=1}^{i_r+1}\frac{\e^{-c 4^i}}{\sqrt{\mu(2^{i+1}B)}} \left[\left(\int_{\mathcal{D}_i}|\nabla \alpha|^2\,\d\mu\right)^{1 /2}+\left(\int_{\mathcal{D}_i} |\alpha|^2\,\d\mu\right)^{1/ 2}\right].
  \end{equation} 
 \end{lemma}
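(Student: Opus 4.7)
The plan is to derive both estimates by writing $\nabla P^{(k)}_{\ell r^2}\alpha$ in kernel form and exploiting decay, but using genuinely different tools in the two cases: for \eqref{Hess-f-esti2} the support of $\alpha$ lies in a distant annulus and we invoke the weighted gradient kernel estimate of Theorem \ref{Lp-estimate}, while for \eqref{Hess-f-esti1} we cannot rely on any separation of supports and must instead pass through condition \eqref{A3} to reduce the problem to the scalar semigroup $P_t$ acting on $|\alpha|$ and $|\nabla\alpha|$.

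For \eqref{Hess-f-esti2}, I would start from the representation
\[
\nabla P^{(k)}_{\ell r^2}\alpha(x)=\int_{\mathcal{D}_i}\nabla_x p^{(k)}_{\ell r^2}(x,y)\,\alpha(y)\,\mu(\d y)
\]
and apply Minkowski's inequality in $L^p(B)$. For $y\in\mathcal{D}_i$ one has $\rho(B,y)\geqslant 2^{i-1}r$, so the weighted $L^p$ bound of Theorem \ref{Lp-estimate} gives, for some $\gamma\in(0,1/8)$,
\[
\|\nabla_x p^{(k)}_{\ell r^2}(\newdot,y)\|_{L^p(B)}\leqslant \frac{C_\gamma\,\e^{C_\gamma\ell r^2-\gamma\rho(B,y)^2/(\ell r^2)}}{r\sqrt{\ell}\,\mu(y,r\sqrt{\ell})^{(p-1)/p}},
\]
which produces the claimed decay $\e^{-c 4^i}$ (since $\ell\leqslant n$ is bounded, $4^i/\ell$ is comparable to $4^i$). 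Then Cauchy–Schwarz in $y$ on $\mathcal{D}_i\subset 2^{i+1}B$, combined with local volume doubling \eqref{local-doubling} to compare $\mu(y,r)$ with $\mu(2^{i+1}B)$, converts $\|\alpha\|_{L^1(\mathcal{D}_i)}$ into $\mu(2^{i+1}B)^{1/2}\|\alpha\|_{L^2(\mathcal{D}_i)}$; the resulting polynomial factors $2^{im}$ and the ratio $\mu(2^{i+1}B)/\mu(B)$ from normalizing by $\mu(B)^{1/p}$ are absorbed into $\e^{-c 4^i}$ after slightly shrinking~$c$.

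For \eqref{Hess-f-esti1}, I would first apply the second inequality in \eqref{A3},
\[
|\nabla P^{(k)}_{\ell r^2}\alpha|\leqslant \e^{A\ell r^2}\bigl(P_{\ell r^2}|\nabla\alpha|+A\ell r^2\,P_{\ell r^2}|\alpha|\bigr),
\]
and then split both $|\nabla\alpha|=\sum_{i=1}^{i_r+1}|\nabla\alpha|\mathbbm{1}_{\mathcal{D}_i}$ and $|\alpha|=\sum_{i=1}^{i_r+1}|\alpha|\mathbbm{1}_{\mathcal{D}_i}$. For each $i\geqslant 2$, the scalar Gaussian bound \eqref{Gaussion-upper-bound} together with $\rho(B,\mathcal{D}_i)\geqslant 2^{i-1}r$ and a volume comparison yield the uniform pointwise inequality
\[
P_{\ell r^2}\bigl(f\mathbbm{1}_{\mathcal{D}_i}\bigr)(x)\leqslant \frac{C\,\e^{C\ell r^2-c 4^i}}{\mu(2^{i+1}B)^{1/2}}\,\|f\|_{L^2(\mathcal{D}_i)},\qquad x\in B,
\]
for $f=|\alpha|$ or $f=|\nabla\alpha|$. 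For the innermost shell $i=1$, where $\mathcal{D}_1=4B$ and no off-diagonal decay is available, the on-diagonal bound \eqref{A2} combined with the doubling property \eqref{ExpGrowth} gives the same type of estimate with the (harmless) constant $\e^{-4c}$. Integrating these pointwise bounds over $B$, dividing by $\mu(B)^{1/p}$, and summing over $i$ produces \eqref{Hess-f-esti1}. The main obstacle throughout is bookkeeping: making sure that every factor arising from volume doubling ($2^{im}$, $\mu(2^{i+1}B)/\mu(B)$) is absorbed into the Gaussian decay $\e^{-c 4^i}$ by slightly decreasing the constant $c$, and that the short-range term $i=1$ in \eqref{Hess-f-esti1} is treated without off-diagonal decay.
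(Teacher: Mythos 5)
Your proposal is correct, and the two halves relate to the paper's proof differently. For \eqref{Hess-f-esti1} you follow essentially the paper's route: the second bound in \eqref{A3}, a decomposition of $|\alpha|$ and $|\nabla\alpha|$ over the shells $\mathcal{D}_i$, and the scalar Gaussian bound \eqref{Gaussion-upper-bound} together with a volume comparison to produce the factors $\e^{-c4^i}\mu(2^{i+1}B)^{-1/2}\|\cdot\|_{L^2(\mathcal{D}_i)}$ (the paper works with $(P_t|\nabla\alpha|^2)^{1/2}$ and you with $P_t|\nabla\alpha|$ plus Cauchy--Schwarz in $y$, which is equivalent). For \eqref{Hess-f-esti2}, however, you take a genuinely different route: the paper never touches the kernel $\nabla_xp_t^{(k)}$ here, but instead applies the first bound in \eqref{A3} to reduce to $r^{-1}(P_{\ell r^2}|\alpha|^2)^{1/2}$ and then uses only the scalar heat kernel bound; you instead use the representation $\nabla P^{(k)}_{\ell r^2}\alpha=\int\nabla_xp^{(k)}_{\ell r^2}(\cdot,y)\alpha(y)\,\mu(\d y)$, Minkowski's inequality, and the weighted $L^p$ gradient-kernel estimate of Theorem \ref{Lp-estimate}. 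This is admissible (Theorem \ref{Lp-estimate} holds under condition {\bf (A)}, which is the standing hypothesis), and it parallels the treatment of the term $\mathrm{II}$ in Lemma \ref{lem-tpg2}; the paper's version is slightly lighter in that it bypasses the form-valued kernel entirely, while yours is more uniform in style with the rest of Section 3. Two small points of bookkeeping: since \eqref{Hess-f-esti2} is needed for arbitrary $i\geqslant1$ (in particular $i>i_r$, where $2^{i+2}r$ may exceed $32$), the comparison of $\mu(y,r\sqrt{\ell})$ with $\mu(2^{i+1}B)$ must invoke the full exponential-doubling bound \eqref{A1} rather than \eqref{local-doubling}, the extra factor $\e^{C2^{i}}$ being absorbed into $\e^{-c4^i}$ exactly as you indicate; and for the $i=1$ term of \eqref{Hess-f-esti1} the bound $p_t(x,y)\leqslant C\e^{Ct}/\mu(y,\sqrt t)$ you need is the off-diagonal estimate \eqref{Gaussion-upper-bound} with the Gaussian factor dropped, not literally the on-diagonal hypothesis \eqref{A2}. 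Neither point affects the validity of the argument.
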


\begin{proof} All constants appearing below depend only on $p, A,m$ and $V_k$. 
 We first observe from condition \eqref{A3} that
\begin{align}\label{Hess-ineq2}
      \left(\int_B \lf|\nabla P_{t}^{(k)} \alpha \r| ^p\,\d\mu\right)^{1/p}
    &\leqslant\frac{1}{\sqrt{t}}\e^{A+A t}\left(\int_B \Big(P_t |\alpha| ^2\Big)^{p/2}(x)\,\mu(\d x)\right)^{1/p}.
    \end{align}
    We substitute $t=\ell r^2$ in estimate~ \eqref{Hess-ineq2} for $\ell\in\{1,2,\ldots, n\}$. As $r\in (0,8)$,
    there exists a positive constant $\tilde C$ depending on $n$ and $A$ such that
    \begin{align*}
      \left(\int_B|\nabla P^{(k)}_{\ell r^2} \alpha|^p\,\d\mu\right)^{1/p}
      & \leqslant \frac{\tilde C}{r}\left(\int_B \Big(P_{\ell r^2 }|\alpha|^2\Big)^{p/2}(x)\,\mu(\d x)\right)^{1/p}.
    \end{align*}
    By the off-diagonal heat kernel upper bound of $p_t(x,y)$, see \eqref{Gaussion-upper-bound},
    we have
  \begin{align*}
    p_t(x,y)\leqslant \frac{C
    \e^{\tilde{\sigma}_2 t}}{\mu(y,\!\sqrt{t})}\exp{\left(-c_0\frac{\rho^2(x,y)}{t}\right)},\quad x,y\in M,
  \end{align*}
  for some constants $C, \tilde{\sigma}_2>0$ and $c_0 \in (0,1/4)$.
  As a consequence, since $0< r<8$, we obtain for $x\in B$, a positive constant $C>0$ such that
  \begin{align*}
   P_{\ell r^2}(|\alpha|^2)(x)
    &\leqslant C\int_{\mathcal{D}_i}\mu\lf(y,\sqrt{\ell}r\r)^{-1}\exp\left(-c_0\frac{\rho^2(x,y)}{\ell r^2}+\tilde{\sigma}_2 \ell r^2\right)|\alpha|^2(y)\,\mu(\d y)\\
    &\leqslant C\e^{\tilde{\sigma}_2 \ell r^2-c_04^i/\ell }\int_{\mathcal{D}_i}\mu\lf(y,\sqrt{\ell}r\r)^{-1}|\alpha|^2(y)\,\mu(\d y).
  \end{align*}
  Moreover, for $y\in \mathcal{D}_i$, we have
  $2^{i+1}B\subset B(y, 2^{i+2}r)$, and then by \eqref{A1}, for $\ell\in \{1,2,\ldots, n\}$,
  \begin{align*}
    \frac{1}{\mu\lf(y,\sqrt{\ell}r\r)}\leqslant \frac{2^{m(i+2)}\e^{C2^{i+2}}}{\mu(y,2^{i+2}r)}\leqslant \frac{2^{m(i+2)}\e^{C2^{i+2}}}{\mu(2^{i+1}B)}.
  \end{align*}
  It follows that
  \begin{align}\label{GI-0}
    P_{\ell r^2}(|\alpha|^2)(x)\leqslant C\e^{\tilde{\sigma} \ell r^2-{c_04^i}/{\ell}}\left(\frac{2^{m(i+2)}\e^{C2^{i+2}}}{\mu(2^{i+1}B)}\int_{\mathcal{D}_i}|\alpha|^2\,\d\mu\right)
  \end{align}
  for all $x\in B$, and there exists $\alpha_1<c_0/n$ such that for all $\ell\in \{1,2,\cdots, n\}$.
  \begin{align}\label{gradient-ineq-esti}
    \left(\frac{1}{\mu(B)}\int_{B}\Big(P_{\ell r^2}(|\alpha|^2)\Big)^{p/2}\,\d\mu\right)^{1/p}\leqslant C\e^{\tilde{\sigma} \ell r^2-\alpha_1 4^i}\left(\frac{1}{\mu(2^{i+1}B)}\int_{\mathcal{D}_i}| \alpha|^2\,\d\mu\right)^{1/2}.
  \end{align}
 Combining \eqref{Hess-ineq2} and \eqref{gradient-ineq-esti}, we 
    complete the proof of \eqref{Hess-f-esti2}.\smallskip

  We next observe that condition \eqref{A3} yields
  \begin{align}\label{Hess-ineq}
  &  \left(
\int_B|\nabla P_t^{(k)}\alpha|^p\,d\mu
\right)^{1/p}
\le
\e^{A+At}
\left(
\int_B(P_t|\nabla\alpha|^2)^{p/2}\,d\mu
\right)^{1/p}
+
\e^{A+At}
\left(
\int_B(P_t|\alpha|^2)^{p/2}\,d\mu
\right)^{1/p}.
  \end{align}
   If $\alpha$ is supported in $2^{i_r+2}B:=\cup_{i=1}^{i_r+1}\mathcal{D}_i$, then from \eqref{GI-0}, there exists $\alpha_1>0$ such that
    \begin{align*}
    P_{\ell r^2}(|\alpha|^2)(x)
 & \leqslant   C\sum_{i=1}^{i_r+1}\left(\frac{\e^{\tilde{\sigma} \ell r^2-\alpha_14^i}}{\mu(2^{i+1}B)}\int_{\mathcal{D}_i}|\alpha|^2\,\d\mu\right),
  \end{align*}
  which implies
  \begin{align*}
  & \left(\frac{1}{\mu(B)}\int_{B}\Big(P_{\ell r^2}(|\alpha|^2)\Big)^{p/2}\,\d\mu\right)^{1/p}\leqslant C\sum_{i=1}^{i_r+1}\e^{\tilde{\sigma} \ell r^2}\left(\frac{\e^{-2\alpha_1 4^i}}{\mu(2^{i+1}B)}\int_{\mathcal{D}_i}| \alpha|^2\,\d\mu\right)^{1/2}.
  \end{align*}
 By the same reason, we have
  \begin{align*}
  \left(\frac{1}{\mu(B)}\int_{B}\Big(P_{\ell r^2}(|\nabla \alpha|^2)\Big)^{p/2}\,\d\mu\right)^{1/p}\leqslant C\sum_{i=1}^{i_r+1}\e^{\tilde{\sigma} \ell r^2}\left(\frac{\e^{-2\alpha_1 4^i}}{\mu(2^{i+1}B)}\int_{\mathcal{D}_i}|\nabla \alpha|^2\,\d\mu\right)^{1/2}.
  \end{align*}
  Altogether, these estimates yield
  \begin{align*}
    \left(\frac{1}{\mu(B)}\int_B \lf|\nabla P_{\ell r^2}^{(k)} \alpha \r|^p\d\mu\right)^{1/p}\!\leqslant C' \sum_{i=1}^{i_r+1}\e^{(A+\tilde{\sigma}) \ell r^2}\left[\left(\frac{\e^{-2\alpha_1 4^i}}{\mu(2^{i+1}B)}\int_{\mathcal{D}_i}|\alpha|^2\,\d\mu\right)^{1/2}+\left(\frac{\e^{-2\alpha_1 4^i}}{\mu(2^{i+1}B)}\int_{\mathcal{D}_i}|\nabla \alpha|^2\,\d\mu\right)^{1/2}\right]
  \end{align*}
  which completes the proof of \eqref{Hess-f-esti1}.
\end{proof}

With the help of the Lemmas \ref{theorem-add-2}, \ref{lem-term1} and \ref{lem-small-radii}, we are
now in position to finish the proof of Theorem \ref{main-them1}.

\begin{proof}[Proof of Theorem \ref{main-them1}] For simplicity, denote by $C,c$  positive constants   depending only on $p,\si, A,m$ and $V_k$, which may vary from one term to another. 

  By Lemma \ref{theorem-add-2}, we only need to show that under the given assumptions, items (i) and (ii) of
  Lemma \ref{theorem-add-2} hold true. We first verify item (i) of Lemma
  \ref{theorem-add-2}. Observe from Lemma \ref{lem-term1}, there
  exists an integer $n$ and a constant $C>0$   such that for all $j\geqslant 1$, $\alpha\in L^2(\OO^{(k)}(B_j),\mu)$ and
  $x\in 4B_j$,
  \begin{align*}
    \sup_{ B \in \scr B(x)} \frac{1}{\mu(B\cap 4B_j)}\int_{B\cap 4B_j} \bigl|\wz{\T}^{(k)}_{\mu, \sigma}(I-P^{(k)}_{r^2})^n\alpha\bigr|^2\,\d\mu\leqslant C  \SM_{\loc}(|\alpha|^2)(x).
  \end{align*}
 Recall that $\SM_{\loc}$ is bounded on $L^p(\mu)$ for
  $1<p\leqslant \infty$; thus   $\SM^{\#}_{4B_j,\wz {\T}^{(k)}_{\mu,\sigma},n}$ is bounded from $L^p(\OO^{(k)}(B_j),\mu)$ to $L^p(4B_j,\mu)$ uniformly in $j$, i.e., assertion (i) is proved.

  Next, we prove (ii) of Lemma \ref{theorem-add-2}.  Assume that
  $\alpha \in \Omega_0^{(k)}(B_j)$ and let $h=\int_0^{\infty}v(t)P^{(k)}_t \alpha\,\d t$ with $v$
  as  in \eqref{eqn-wzT}. Since $\wz{\T}^{(k)}_{\mu, \sigma}(\alpha) =\nabla\, h$
  and inequality \eqref{B-control} holds for $B\cap 4B_j$, we have
  \begin{align*}
   & \left(\frac{1}{\mu(B\cap 4B_j)}\int_{B\cap 4B_j} \big|\wz {\T}^{(k)}_{\mu,\sigma} P^{(k)}_{\ell r^2}\alpha \big |^p\,\d\mu \right) ^{1/p} \\
    &\quad = \left(\frac{1}{\mu(B\cap 4B_j)}\int_{B\cap 4B_j}\big|\nabla P^{(k)}_{\ell r^2}h\big|^p\,\d\mu\right)^{1/p}\\
    &\quad \leqslant C \left(\frac{1}{\mu(B)}\int_B \big|\nabla P^{(k)}_{\ell r^2}h\big|^p\,\d\mu\right)^{1/p}.
  \end{align*}
Let  $\varphi_0$ be a $C^{\infty}$ function supported in $2^{i_r+2}B$ with $\varphi_0(x)=1$ on $2^{i_r+1}B$ and $|\nabla \varphi_0|\leqslant 1/8$ as $8\leqslant 2^{i_r+1}r\leqslant 16$.
  We write
  \begin{align*}
    \nabla P^{(k)}_{\ell r^2}h=\nabla P^{(k)}_{\ell r^2}g_0+\sum_{i=i_r+1}^{\infty}\nabla P^{(k)}_{\ell r^2}g_i,
  \end{align*}
  where $g_0=h\varphi_0$ and $g_i=h(1-\varphi_0)\1_{\mathcal{D}_i}$. Next,
  we distinguish the two cases $i=0$ and $i> i_r$ where $i_r$
  is defined in \eqref{def-ir}. For the case $i=0$, since $g_0\in \Omega^{(k)}$ is supported in $2^{i_{r}+1}B$,  by the
  inequality \eqref{Hess-f-esti1} in Lemma \ref{lem-small-radii} and the definition of $\varphi_0$, we
  have
  \begin{align}\label{part-1}
    &\left(\frac{1}{\mu(B)}\int_B\big|\nabla P^{(k)}_{\ell r^2}g_0\big|^p\,\d\mu\right)^{1/p}\notag\\
    &\leqslant C\sum_{i=1}^{i_r+1}\e^{-c 4^i}\left(\left(\frac{1}{\mu(2^{i+1}B)}\int_{\mathcal{D}_i}|\nabla g_0|^2\,\d\mu\right)^{1/2}+\left(\frac{1}{\mu(2^{i+1}B)}\int_{\mathcal{D}_i}| g_0|^2\,\d\mu\right)^{1/2}\right)\notag\\
    &\leqslant C\sum_{i=1}^{i_r+1}\e^{-c 4^i}\left(\left(\frac{1}{\mu(2^{i+1}B)}\int_{\mathcal{D}_i}|\nabla h|^2\,\d\mu\right)^{1/2}+\left(\frac{1}{\mu(2^{i+1}B)}\int_{\mathcal{D}_i}| h|^2\,\d\mu\right)^{1/2}\right)\notag\\
    &\leqslant  C\sum_{i=1}^{i_r+1}\e^{-c 4^i}\left(\left(\SM_{\loc}(|\nabla h|^2)(x)\right)^{1/2}+\left(\SM_{\loc}(|h|^2)(x)\right)^{1/2}\right).
  \end{align}

  For the second regime $i> i_r$, we proceed with inequality \eqref{Hess-f-esti2} in Lemma \ref{lem-small-radii}   such that
  \begin{align}\label{Hess-gi}
    \left(\frac{1}{\mu(B)}\int_B|\nabla P^{(k)}_{\ell r^2}g_i|^p\,\d\mu\right)^{1/p}\leqslant \frac{C\e^{-c 4^i}}{r}\left(\frac{1}{\mu(2^{i+1}B)}\int_{\mathcal{D}_i}|h|^2\,\d\mu\right)^{1/2}.
  \end{align}
  On the other hand, since $i >i_r$, it is easy to see that
  $4B_j\subset2^{i+1}B$, thus
  \begin{align}\label{h-esti}
    \lf(\frac{1}{\mu(2^{i+1}B)}\int_{\mathcal{D}_i}|h|^2\,\d\mu\r)^{1/2}&\leqslant \left(\frac{1}{\mu(2^{i+1}B)}\int_0^1 v(t)\int_{\mathcal{D}_i} |P^{(k)}_t \alpha|^2\, \d\mu \, \d t \right)^{1/2}\notag\\
                                                                &\leqslant C\left(\frac{1}{\mu(4B_j)}\int_{B_j}|\alpha|^2\,\d\mu\right)^{1/2} \notag\\
                                                                &\leqslant C\Big(\SM_{4B_j}(|\alpha|^2)(x)\Big)^{1/2}.
  \end{align}
  Thus the contribution of the terms in the second regime $i> i_r$ is
  bounded by combining \eqref{Hess-gi} and \eqref{h-esti},
  \begin{align}\label{part-2}
    \sum_{i> i_r}\left(\frac{1}{\mu(B)}\int_B|\nabla P^{(k)}_{\ell r^2}g_i|^p\,\d\mu\right)^{1/p}\leqslant \sum_{i> i_r} \frac{C \e^{-c4^i}}{r}\big(\SM_{4B_j}(|\alpha|^2)(x)\big)^{1/2}
  \end{align}
  and it remains to recall that $1/r\leqslant 2^{i+1}/8$ when $i> i_r$.

  We conclude from \eqref{part-1} and \eqref{part-2} that for any
  $p>2$ and $\ell \in \{1,2,\ldots,n\}$, there exists a constant $C$
  independent of $j$ such that
  \begin{align*}
    \left(\frac{1}{\mu(B\cap 4B_j)}\int_{B\cap 4B_j}|\wz{\T}^{(k)}_{\mu, \sigma} P^{(k)}_{\ell r^2}\alpha|^p\,\d\mu\right)^{1/p}\leqslant C\Big(\SM_{4B_j}(|\wz{\T}^{(k)}_{\mu, \sigma}\alpha|^2)+(S_j\alpha )^2\Big)^{1/2}(x)
  \end{align*}
 for  all $\alpha \in L^2(\OO^{(k)}(B_j),\mu)$, all balls $B$ in $M$ and
  all $x\in B\cap 4B_j$, where the radius $r$ of $B$ is less than 8,
  and where
  \begin{align}\label{eqn-squref}
    (S_j\alpha)^2:=\SM_{\loc}\bigl(|\wz{\T}^{(k)}_{\mu, \sigma}\alpha|^2\mathbbm{1}_{M\setminus 4B_j}\bigr)
    +\SM_{\loc}\bigl(|h|^2\bigr)(x) +\SM_{4B_j}\bigl(|\alpha|^2\bigr).
  \end{align}

  Our last step is to show that the operator $S_j$ defined in
  \eqref{eqn-squref} is bounded from $L^p(\OO^{(k)}(B_j),\, \mu)$ to $L^p(4B_j,\, \mu)$ for any
  $p\in (2,\fz)$ with operator norm independent of $j$.  By
  \eqref{eqn-squref}, we only need to show that the operators
$$\big(\SM_{\loc}(|\wz{\T}^{(k)}_{\mu, \sigma}\alpha|^2\mathbbm{1}_{M\setminus 4B_j})\big)^{1/2},\ \
\big(\SM_{\loc}(|h|^2)\big)^{1/2} \ \mbox{ and }\
\big(\SM_{4B_j}(|\alpha|^2)\big)^{1/2}$$ respectively are bounded from
$L^p(B_j)$ to $L^p(4B_j)$.  Indeed, for any $\alpha \in L^p(4B_j)$, by Lemma
\ref{lem-fop} we know that $4B_j$ satisfies the doubling property
\eqref{A1}, which for $p>2$ implies that
$\big(\SM_{4B_j}(|\alpha|^2)\big)^{1/2}$ is bounded from $L^p(B_j)$ to
$L^p(4B_j)$ by a constant depending only on the doubling property
\eqref{A1}.  On the other hand, using the local estimate of $p_t^{(k)}(x, y)$ (\cite{CZ07}), we see that
$$|p_t^{(k)}(x, y)|\leqslant \frac{C}{\mu(x,\ss t)} \e^{-\frac{\gamma\rho(x,y)^2}{t}},\quad t\in (0,1],\ \gamma<1/4, $$
which together with \eqref{integ-esti2} and Cauchy's inequality implies
\begin{align*}
  \left\Vert P^{(k)}_t\alpha\right\Vert_p \leqslant C \|\alpha\|_p,\quad t\in (0,1].
\end{align*}
This,  together with  \eqref{A1} and   the $L^{p/2}$-boundedness of   $\SM_{\loc}(\cdot)$,
further implies
\begin{align*}
\big\|\lf(\SM_{\loc}(|h|^2)\r)^{1/2}\big\|_p
\leqslant C  \Big\|\int_0^{1}v(t)P^{(k)}_t \alpha\,\d t\Big\|_{p}\leqslant C\left(\int_0^{1}\frac{w(t)\,\e^{ -\sigma t }}{\sqrt{t}}\,\d t\right)\,\| \alpha\|_{L^p(B_j)}\leqslant C\| \alpha \|_{L^p(B_j)},
\end{align*}
for $p>2$ and $\sigma>0$.
Finally, the $L^p$-boundedness of
$$\big(\SM_{\loc}(|\wz{\T}^{(k)}_{\mu, \sigma}\alpha|^2\mathbbm{1}_{M\setminus
  4B_j})\big)^{1/2}$$ 
  follows from the $L^{p/2}$-boundedness of
$\SM_{\loc}(\newdot)$ and an argument similar to the $L^p$ boundedness of $\rm{II}$ in
\eqref{eqn-wztqp} since $\alpha\in \OO^{(k)}(B_j)$ and
 $$\mathbbm{1}_{M\setminus
  4B_j} \wz{\T}^{(k)}_{\mu, \sigma}\alpha = (1-\chi_j) \wz{\T}^{(k)}_{\mu, \sigma}(\alpha \varphi_j).$$ 
  This implies that the operator $S_j$ is bounded
from $L^p(B_j)$ to $L^p(4B_j)$ with an upper bound independent of~$j$.

We infer that the requirements (i) and (ii) in Lemma
\ref{theorem-add-2} both hold true under the assumptions \eqref{A1}, \eqref{A2} \, and \,  \eqref{A3}.
Thus, the operator $\wz{\T}^{(k)}_{\mu, \sigma}$ is bounded from $L^p(\OO^{(k)}(B_j),\mu)$ to
$L^p(\OO^{(k)}(4B_j),\mu)$ for $p>2$ with a constant independent of~$j$.
Therefore, by Lemma \ref{lem-tpg2}, the operator $\T^{(k)}_{\mu,\sigma}$ is strong type
$(p,p)$ for $p>2$. This concludes the proof of Theorem~\ref{main-them1}.
\end{proof}

\section{$L^p$-boundedness under curvature conditions}
\subsection{Proof of Theorem \ref{C1}}

By Theorem \ref{main-them1},
it suffices to  verify conditions \eqref{A1}, \eqref{A2} \, and \,  \eqref{A3} by using {\bf (C)}. By the Laplacian comparison theorem presented in \cite{Qian} and Lemmas 2.1-2.2 in \cite{GW01},
\eqref{A1} follows from the curvature-dimension condition \eqref{CD}.
Moreover, according to \cite{GW01}, \eqref{A2} is a consequence
of \eqref{CD} as well.
Thus, it remains to prove \eqref{A3}, which is Proposition \ref{cor} below.

\subsection{Derivative formulas}

Let $X_t(x)$ be diffusion process on $M$ generated by $L:=-\DD+\nn h$ with a fixed initial value $x\in M$, and let $u_t(x)$ be the horizontal lift of $X_t(x)$ to $O(M)$, such that
$$\d X_t(x)= \nn h(X_t(x))\,\d t+ \ss 2\, u_t(x)\circ \d B_t,\quad t\geqslant 0,\ X_0(x)=x,$$
where $B_t$ is an $m$-dimensional Brownian  motion on $\R^m$.
Then the associated stochastic parallel displacement is defined as
$$\ptr_{t,x}:= u_t(x) \, u_0(x)^{-1}\colon T_xM\to T_{X_t(x)}M,$$
where as usual orthonormal frames $u$ at a point $x$ are read as isometries
$u\colon \R^m\to T_xM$.
For fixed $k\in\N$, let $E:=\Lambda^{k}T^*M$ and $\tilde E:=T^*M\otimes\Lambda^{k}T^*M$. We are now in position to introduce the derivative formula for $P_t^{(k)}$.
To this end, let 
$$
\tilde{\mathscr{R}}_{h}^{(k)}=(\Ric-\Hess\, h)^{\rm tr}\otimes 1_{E}-2R^{(k)}\smallbullet + 1_{T^*M} \otimes \mathscr{R}^{(k)}_h  \in \End(\tilde E) ,
$$
where
$(\Ric -\Hess h)^{\rm tr}$ is the transpose of the Bakry-\'Emery Ricci curvature tensor $\Ric-\Hess\,h \in \Gamma (\End TM).$
 Let $Q_t\in \End( E_x)$ and $\tilde{Q}_t\in \End (\tilde E_x)$ denote the solutions to the ordinary differential equations
\begin{align*}
&\frac{\d}{\d t} Q_t=-Q_t  (\mathscr{R}^{(k)}_h)_{\ptr_{t,x}}, \ \ \ \  \  \ t\geqslant 0,\   \  Q_0=\text{id} _{E_x},\\
&\frac{\d}{\d t} \tilde{Q}_t=-\tilde{Q}_t (\tilde{\mathscr{R}}_h^{(k)})_{\ptr_{t,x}}, \ \ \ t\geqslant 0,\  \  \tilde{Q}_0=\text{id} _{\tilde E_x},
\end{align*}
where
\begin{align*}
& (\mathscr{R}^{(k)}_h)_{\ptr_{t,x}}= \ptr_{t,x}^{-1}\circ  \mathscr{R}^{(k)}_h \circ \ptr_{t,x},\quad  \mbox{and} \quad \  (\tilde{\mathscr{R}}_h^{(k)})_{\ptr_{t,x}} =\ptr_{t,x}^{-1}\circ  \tilde{\mathscr{R}} _h^{(k)} \circ \ptr_{t,x}.
\end{align*}
Let  $\mathscr{Q}_{\bolddot}$ and $\tilde{\mathscr{Q}}_{\bolddot}$  be the transposes of ${Q}_{\bolddot}$ and $\tilde{{Q}}_{\bolddot}$  respectively.

Moreover,  we have the commutation relation (see \cite[Proposition 2.15]{DT01})
$$\nn \DD_\mu^{(k)} = \tilde{\DD}_\mu^{(k)}\nabla + H^{(k)},$$
where $\tilde{\DD}_\mu^{(k)}:=\tilde\Box_{\mu}+\tilde{\mathscr{R}}^{(k)}_h$
with $\tilde\Box_{\mu}$ the Bochner Laplacian on $T^*M\otimes
E$ with respect to the induced connection on $T^*M\otimes
E$ and
$$
H^{(k)}:= \nabla \cdot R^{(k)}+R^{(k)}(\nabla h)+ \nabla \mathscr{R}_h^{(k)}\in \Gamma (\Hom(E, \, T^*M\otimes E).
$$
Let $H^{(k), {\rm tr}}$ be the transpose of the tensor $H^{(k)}$.
Finally let
\beg{align*} \tt P_t^{(k)} :=\e^{-t\tt\DD_\mu^{(k)} }, \quad t\geqslant 0.\end{align*}
For $\eta\in\Omega^{(k)}$, we define $\nn \eta\in\Gamma( T^*M\otimes E)$ by letting
$$\nn \eta (v)  :=  \nn_{v}\eta ,\quad v\in TM.$$

Assume that there exists a non-negative Kato function \(U\in\mathcal K\) such that
\begin{align}\label{Kato-U}
\langle \mathscr{R}_h^{(k)}\eta,\eta\rangle \geqslant -U|\eta|^2,
\qquad
\langle \tilde{\mathscr{R}}_h^{(k)}\zeta,\zeta\rangle \geqslant -U|\zeta|^2,
\qquad
|H^{(k)}|\leqslant U.
\end{align}
Then
\[
|Q_t|
\leqslant
\exp\left(\int_0^tU(X_s)\, \d s\right),
\qquad
|\tilde Q_t|
\leqslant
\exp\left(\int_0^tU(X_s)\, \d s\right).
\]
Therefore one needs the following Kato--Khasminskii estimate:
for every \(q\geqslant 1\), there exist constants \(C_q,A_q>0\) such that
\begin{align}\label{KKestimate1}
\sup_{x\in M}
\mathbb E^x
\exp\left(
q\int_0^tU(X_s)\, \d s
\right)
\leqslant 
C_qe^{A_qt},
\qquad t>0.
\end{align}
Moreover,
\begin{align}\label{KKestimate2}
\sup_{x\in M}
\mathbb E^x
\left[
\left(\int_0^tU(X_s)\, \d s\right)^q
\exp\left(
q\int_0^tU(X_s)\, \d s
\right)
\right]
\leqslant
C_q \e^{A_qt}.
\end{align}
These estimates are the main additional ingredient needed to make the Kato-class argument rigorous.

We have the following result.

\begin{proposition}\label{cor} Assume condition {\bf (C)} holds for some $k\in \N^+$. Then for any bounded $\eta\in \OO^{(k)}_{b,1}$,  there exists a constant $A>0$ such that for any $t>0$,
\begin{align}\label{FundEstimate}
  |\nn P_t^{(k)}\eta |\leqslant  \e^{A+A t} \min\bigg\{ t^{-1/2} \,  \left(P_t|\eta |^2\right)^{1/2},\ (P_t|\nabla \eta |^2)^{1/2}  +  (P_t |\eta |^2)^{1/2}  \bigg\}.
\end{align}
   \end{proposition}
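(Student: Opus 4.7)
My plan is to derive the two bounds in \eqref{FundEstimate} separately from the curvature bounds in condition \textbf{(C)}, using the Feynman--Kac formula for $P_t^{(k)}$, the commutation relation $\nabla\Delta_\mu^{(k)}=\tilde\Delta_\mu^{(k+1)}\nabla-H^{(k)}$, and a Bismut--Elworthy--Li type formula. As a preparation, I would first note that \eqref{CV} bounds both $|\mathscr R^{(k)}_h|$ and $|\tilde{\mathscr R}^{(k+1)}_h|$ as well as $|H^{(k)}|$ by a constant depending only on $K$; consequently $|Q_t|\leqslant\mathrm e^{At/2}$, $|\tilde Q_t|\leqslant\mathrm e^{At/2}$, and the Feynman--Kac representations
\[
P_t^{(k)}\eta(x)=\E\bigl[\mathscr Q_t\,\ptr_{t,x}^{-1}\eta(X_t)\bigr],\qquad \tilde P_t^{(k+1)}\omega(x)=\E\bigl[\tilde{\mathscr Q}_t\,\ptr_{t,x}^{-1}\omega(X_t)\bigr]
\]
yield the scalar dominations $|P_t^{(k)}\eta|\leqslant\mathrm e^{At}P_t|\eta|$ and $|\tilde P_t^{(k+1)}\omega|\leqslant\mathrm e^{At}P_t|\omega|$ (with possibly enlarged $A$).

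For the second term in the minimum in \eqref{FundEstimate} I would use the commutation relation together with Duhamel's formula to write
\[
\nabla P_t^{(k)}\eta=\tilde P_t^{(k+1)}(\nabla\eta)+\int_0^t \tilde P_{t-s}^{(k+1)}\bigl(H^{(k)}P_s^{(k)}\eta\bigr)\,\d s.
\]
Applying the pointwise dominations of the previous step and the uniform bound $|H^{(k)}|\leqslant K$ gives
\[
|\nabla P_t^{(k)}\eta|\leqslant \mathrm e^{At}P_t|\nabla\eta|+K\int_0^t \mathrm e^{A(t-s)}P_{t-s}\bigl(\mathrm e^{As}P_s|\eta|\bigr)\,\d s \leqslant \mathrm e^{A't}\bigl(P_t|\nabla\eta|+A't\,P_t|\eta|\bigr),
\]
after absorbing constants, which is the second estimate.

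For the first ($t^{-1/2}$) bound, the core idea is a Bismut--Elworthy--Li formula on forms: considering the semimartingale $s\mapsto \tilde{\mathscr Q}_s\,\ptr_{s,x}^{-1}\nabla P_{t-s}^{(k)}\eta(X_s)$ on $[0,t]$, applying It\^o's formula and the commutation relation, and then integrating by parts along an adapted, finite-variation cut-off process $k_s$ with $k_0=0$, $k_t=\operatorname{id}$, yields an identity of the form
\[
\nabla P_t^{(k)}\eta(x)=\E\!\left[\mathscr Q_t\,\ptr_{t,x}^{-1}\eta(X_t)\int_0^t \dot k_s\,\d B_s\right]-\int_0^t \E\!\left[\tilde{\mathscr Q}_s\,\ptr_{s,x}^{-1}H^{(k)}P_{t-s}^{(k)}\eta(X_s)(\newdot)\right]\d s.
\]
Choosing $k_s=(s/t)\operatorname{id}$, the stochastic integral has $L^2$-norm $C/\sqrt t$, so that Cauchy--Schwarz on the first term and $P_s|P_{t-s}^{(k)}\eta|\leqslant \mathrm e^{At}(P_t|\eta|^2)^{1/2}$ on the second produce the bound $\mathrm e^{At}(t^{-1/2}+At)(P_t|\eta|^2)^{1/2}$.

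The main obstacle I expect is the careful execution of the integration by parts in the third step: one must verify that the local martingale part of the It\^o expansion is a true martingale (which requires $\eta\in\Omega^{(k)}_{b,1}$ and the uniform curvature bounds to control $|\nabla P_s^{(k)}\eta|$ along paths), and that the $H^{(k)}$-correction term can be dominated by $At(P_t|\eta|^2)^{1/2}$ rather than $At\,P_t|\eta|$, so that the final estimate has the square-mean structure on the right-hand side. Both issues are resolved by the uniform bounds $|H^{(k)}|\leqslant K$ and $|\tilde{\mathscr R}^{(k+1)}_h|\leqslant K$ from \eqref{CV} combined with Jensen's inequality applied to $P_s$, after which the two estimates combine to give the minimum in \eqref{FundEstimate}.
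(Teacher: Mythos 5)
Your overall strategy is essentially the paper's: both bounds come from Bismut-type formulas built on the commutation relation $\nn\DD_\mu^{(k)}=\tilde\DD_\mu^{(k+1)}\nn-H^{(k)}$ together with Feynman--Kac domination of $P_t^{(k)}$ and $\tilde P_t^{(k+1)}$ under the curvature bounds of \textbf{(C)}. Your Duhamel identity for the second bound is just the analytic form of the paper's global Bismut formula \eqref{BE-2}, and your integration-by-parts formula with $k_s=(s/t)\,\mathrm{id}$ is the global formula \eqref{BE-1}. So the content is the right one.

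There is, however, a genuine gap in how you propose to justify these formulas, and it is precisely the point where the paper has to work. You say the true-martingale property of the It\^o expansion is ``resolved by the uniform bounds $|H^{(k)}|\leqslant K$ and $|\tilde{\mathscr R}^{(k+1)}_h|\leqslant K$,'' but those bounds only control the transport operators $Q_s,\tilde Q_s$; they do not control $|\nn P_{t-s}^{(k)}\eta|$ along the diffusion paths, which is exactly the quantity the proposition is trying to bound. For $\eta\in\OO^{(k)}_{b,1}$ on a noncompact manifold you have no a priori boundedness of $|\nn P_s^{(k)}\eta|$, so applying the global formula with the deterministic $k_s=(s/t)\,\mathrm{id}$ (or, equivalently, invoking Duhamel) at the outset is circular. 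The paper breaks the circle in two steps: (a) for the $t^{-1/2}$ bound it uses a \emph{localized} formula, taking $\ell$ adapted with $\ell\equiv0$ after the exit time $\tau_D$ of a geodesic ball $D$, so the local martingale is trivially a true martingale; the bound $\E\int_0^{t\wedge\tau}|\dot\ell_s|^2\,\d s\leqslant c(f)/(1-\e^{-c(f)t})$ from \cite{Thalmaier-Wang:98} together with the Laplacian comparison theorem lets the radius of $D$ tend to infinity and recovers the factor $t^{-1/2}$; (b) the resulting bound is then fed into the Kolmogorov equation to get the uniform estimate \eqref{EstUniform} for $\eta\in\OO_0^{(k)}$, which is what legitimizes the global formula with $\ell\equiv1$, after which one extends to $\OO^{(k)}_{b,1}$ by first-order cut-offs. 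Your plan needs this localization-plus-bootstrap (or an equivalent device) inserted before either of your two identities can be used; as written, the ``main obstacle'' you identify is named but not actually overcome.
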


 \beg{proof}
Consider for   $s\in [0,t]$:
\beg{align*}&N_s:=Q_s{/\!/ }_{s,x}^{-1} P_{t-s}^{(k)} \eta (X_s(x)),\\
& \tilde N_s:=\tilde{Q}_s{/\! / }_{s,x}^{-1} \nabla P_{t-s}^{(k)} \eta \left(X_s(x)\right).\end{align*} The crucial observation \cite[Theorem 3.7]{DT01} is that
\begin{align}\label{MartCrucial}
  Z_s^{(k)}:=\langle\tilde N_s,\xi_s\rangle-\langle N_s, U_s^{(k)}\rangle
\end{align}
is a local martingale where
$$U_s^{(k)}:=\int_{0}^s  \mathscr{Q}_r^{-1} \tilde{\mathscr{Q}}_r\dot\xi_r \,\d B_r + \int_{0}^s \mathscr{Q}_r^{-1} H_{\ptr_{r,x}}^{(k),\tr}\tilde{\mathscr{Q}}_r\xi_r \,\d s$$
and where $\xi_s$ may be any adapted process with absolutely continuous paths,
taking values in $T^*_xM\otimes E_x^{(k)}$. For simplicity, in the sequel, we always take
$\xi_s=\ell_s\xi$ for some fixed vector $\xi\in T^*_xM\otimes E_x^{(k)}$ and $\ell_s$
real-valued with absolutely continuous paths.
This leads to the local martingale
\begin{align}\label{MartCrucialnew}
  Z_s^{(k)}&:=\ell_s\langle\tt{Q}_s {/\!/}_{s,\, x}^{-1} \nabla P_{t-s}^{(k)}\eta (X_s(x)),\xi\rangle\notag\\
  &\quad-\left\langle  \ptr_{s, \, x}^{-1}P_{t-s}^{(k)}\eta(X_s(x)), \mathscr{Q}_s\int_{0}^s \dot\ell_r \mathscr{Q}_r^{-1} \tilde{\mathscr{Q}}_r\xi  \, \d B_r + \mathscr{Q}_s\int_{0}^s \ell_r\mathscr{Q}_r^{-1} H _{\ptr_{r,x}}^{(k),\tr}\tilde{\mathscr{Q}}_r\xi \,\d r\right\rangle.
\end{align}

When exploiting the martingale property of \eqref{MartCrucialnew}, there are different strategies for the choice of $\ell_s$ leading to different types of stochastic formulas for the covariant derivative $\nabla P_t^{(k)}\eta$.\smallskip

(a)\ (\textit{First upper bound in \eqref{FundEstimate}}) \
If $\ell $ is a bounded adapted process with paths in the Cameron-Martin
space $L^{2}([0,t];[0,1])$  such that $\ell(0)=1$ and $\ell(r)=0$ for $r\geqslant \tau \wedge t$, where $\tau=\tau_D(x)$ is the first exit time of $X_s(x)$ from some
relatively compact neighborhood $D$ of $x$, then trivially the local martingale \eqref{MartCrucialnew} is a true martingale and by taking expectations (see \cite[Section 4]{DT01}) 
the local covariant Bismut formula holds,
\begin{align}
&\left\langle \nabla P_t^{(k)}\eta, \xi  \right\rangle(x) \label{localBismut}\\
&=-\mathbb{E} \left[ \left\langle  \ptr_{t\wedge \tau, \, x}^{-1}P_{t-{t\wedge \tau}}^{(k)}\eta (X_{t\wedge \tau}(x)), \mathscr{Q}_{t\wedge \tau}\int_{0}^{t\wedge \tau} \dot\ell_s \mathscr{Q}_s^{-1} \tilde{\mathscr{Q}}_s\xi  \, \d B_s + \mathscr{Q}_{t\wedge \tau}\int_{0}^{t\wedge \tau} \ell_s\mathscr{Q}_s^{-1} H _{\ptr_{s,x}}^{(k),\tr}\tilde{\mathscr{Q}}_s\xi \,\d s\right\rangle\right].\notag
\end{align}
Under the condition {\bf (C)},  let $U:=3K+K_0$, then  \eqref{Kato-U} holds. 
By domination of Schr\"odinger semigroups,
\[
|P_s^{(k)}\eta|
\leqslant 
P_s^U|\eta|,
\]
where \(P_s^U\) denotes the scalar Schr\"odinger semigroup generated by
\(-\Delta_\mu+U\). By the Feynman--Kac formula,
\[
P_s^U|\eta|(y)
=
\mathbb E^y
\left[
\exp\left(
\int_0^sU(X_r)\, \d r
\right)
|\eta|(X_s)
\right].
\]
Then one derives the estimate
\begin{align*}
&-\mathbb{E} \lf[ \left\langle  \ptr_{t\wedge \tau, \, x}^{-1}P_{t-{t\wedge \tau}}^{(k)}\eta (X_{t\wedge \tau}(x)), \mathscr{Q}_{t\wedge \tau}\int_{0}^{t\wedge \tau} \dot\ell_s \mathscr{Q}_s^{-1} \tilde{\mathscr{Q}}_s\xi  \, \d B_s  \right \rangle  \right] \\
&\leqslant \mathbb{E} \lf[\big|P_{t-{t\wedge \tau}}^{(k)}\eta (X_{t\wedge \tau}(x))\big| \Big|\mathscr{Q}_{t\wedge \tau}\int_{0}^{t\wedge \tau} \dot\ell_s \mathscr{Q}_s^{-1} \tilde{\mathscr{Q}}_s\xi  \, \d B_s \Big|\r] \\
&\leqslant  \E \lf[P^{U}_{t-{t\wedge \tau}}|\eta| (X_{t\wedge \tau}(x)) \e^{\int_0^{t\wedge \tau}U(X_s)\, \d s}\, \Big|\int_{0}^{t\wedge \tau} \dot\ell_s \mathscr{Q}_s^{-1} \tilde{\mathscr{Q}}_s\xi  \, \d B_s \Big| \r] \\
&\leqslant (P_t |\eta|^2)^{1/2} \E \lf[ \e^{2\int_0^t U(X_s)\, \d s} \, \Big|\int_{0}^{t\wedge \tau} \dot\ell_s \mathscr{Q}_s^{-1} \tilde{\mathscr{Q}}_s\xi  \, \d B_s \Big|^2\r]^{1/2}.
\end{align*}
Let
\[
M_t
:=
\int_0^{t\wedge\tau}
\dot\ell_s\mathscr{Q}_s^{-1}\tilde{ \mathscr{Q}}_s\xi\,dB_s.
\]
Then, by Cauchy--Schwarz inequality, one gets
\[
\mathbb E^x
\left[
\e^{2\int_0^{t}U(X_r)\, \d r}|M_t|^2
\right]
\leqslant \mathbb E^x
\left[
\e^{4\int_0^{t}U(X_r)\, \d r}
\right]^{1/2}
\mathbb E^x
\left(
|M_t|^4
\right)^{1/2}.
\]
The first factor is bounded by \(C\e^{At}\) by \eqref{KKestimate1}. By the Burkholder--Davis--Gundy inequality,
\begin{align*}
\mathbb E^x|M_t|^4
&\leqslant 
C
\mathbb E^x
\left(
\int_0^{t\wedge\tau}
|\dot\ell_s|^2
|\mathscr{Q}_s^{-1}\tilde{ \mathscr{Q}}_s\xi|^2\, \d s
\right)^2\\
& \leqslant C\mathbb E^x
\left( 
\int_0^{t\wedge\tau}
|\dot\ell_s|^2 \e^{2\int_0^s U(X_r)\, \d r} \, \d s
\right)^2.
\end{align*}
Then by Cauchy--Schwarz inequality, we further estimate
\begin{align*}
\mathbb E^x|M_t|^4
& \leqslant Ct\, \mathbb E^x \lf(\int_0^{t} |\dot\ell_{s\wedge\tau}|^4    \e^{4\int_0^{s\wedge\tau} U(X_r)\, \d r} \, \d s\r)\\
& \leqslant Ct\, 
\lf(\mathbb E^x \int_0^{t}
|\dot\ell_{s\wedge\tau}|^8 \, \d s\r)^{1/2}  \lf(\int_0^t \E^x \e^{8\int_0^{s\wedge\tau} U(X_r)\, \d r} \, \d s\r)^{1/2}.
\end{align*} 
Using  the Kato--Khasminskii estimate, we get
\[
\mathbb E^x
\left[
\e^{2\int_0^tU(X_r)\, \d r}|M_t|^2
\right]
\le
C\e^{At} t^{\frac{3}{4}}
\left(
\mathbb E^x
\int_0^{t\wedge\tau}
|\dot\ell_s|^8\, \d s
\right)^{1/4}.
\]
On the other hand, we  use the Kato--Khasminskii estimate to obtain
\begin{align}\label{esti-KK}
\left(
\mathbb E^x
\left|
\e^{\int_{0}^t U(X_s)\, \d s} \int_{0}^{t\wedge \tau} \ell_s\mathscr{Q}_s^{-1} H _{\ptr_{s,x}}^{(k),\tr}\tilde{\mathscr{Q}}_s\xi \,\d s
\right|^2
\right)^{1/2}
\le
C\e^{At}.
\end{align}
This gives
\[
|\nabla P_t^{(k)}\eta|(x)
\le
C\e^{At}
(P_t|\eta|^2)^{1/2}
\left[
\left( t^{3}
\mathbb E^x
\int_0^{t\wedge\tau}|\dot\ell_s|^8\, \d s
\right)^{1/8}
+1
\right].
\]
To make this estimate more explicit, we choose a geodesic ball $D$ of radius $\delta_x$ centered at $x$.
It has been shown in \cite{Thalmaier-Wang:98} that there exists $\ell\in L^{2}([0,t];[0,1])$  such that $\ell(0)=1$ and $\ell(r)=0$ for $r\geqslant \tau \wedge t$ such that
\begin{align*}
\E\lf(\int_0^{t\wedge \tau}|\dot \ell_s |^8\, \d s\r) \leqslant  \frac{1}{t^7}\e^{8c(f)t} ,
\end{align*}
where $c(f):=\sup_D\left\{-f\Delta_{\mu}f+9|\nabla f|^2\right\}<\infty$ and   $f\in C^2(D)$ such that $f(x)=1$ and $f|_{\partial D}=0$. Specifically we may take
\begin{align*}
f(p)=\cos \lf(\frac{\pi \rho(x,\, p)}{2\delta_x}\r).
\end{align*}
Then using the comparison theorem in \cite[Theorem 1]{GW01}, it is easy to see that there exist positive constants $c_1(K,N)$ and $c_2(N)$ such that
\begin{align*}
  c(f)\leqslant \frac{c_1(K,N )}{\delta_x}+\frac{c_2(N)}{\delta_x^2}.
 \end{align*}
 Letting $\delta_x$ tend to $\infty$, we prove that
 \begin{align}
  \big|\nn P_t^{(k)}\eta \big|\leqslant C \e^{A t} t^{-1/2}  (P_t|\eta |^2)^{1/2}. \label{blowupat0}\end{align}

 (b)\ (\textit{Second upper bound in \eqref{FundEstimate}}) \
 We first prove the remaining claim of Proposition \ref{cor} for compactly
 supported $\eta$, i.e., for $\eta \in \OO_0^{(k)}$. To this end, we establish
 an estimate for $|\nabla P_{t}^{(k)}\eta|$ which is uniform in the time variable for small values of $t$.
For $\eta \in \OO_0^{(k)}$, the Kolmogorov equation gives
\begin{align*}
P_{t}^{(k)}\eta = \eta - \int_0^t P_s^{(k)} \Delta_{\mu}^{(k)} \eta \, \mathrm{d} s,
\end{align*}
which by \eqref{blowupat0} implies
\begin{align}
|\nabla P_{t}^{(k)}\eta| &\leqslant |\nabla \eta| + \int_0^t |\nabla P_s^{(k)} \Delta_{\mu}^{(k)} \eta| \,\mathrm{d} s \notag\\
&\leqslant |\nabla \eta| + c \int_0^t \e^{A s} s^{-1/2} \left( P_s | \Delta_{\mu}^{(k)} \eta |^2 \right)^{1/2} \, \mathrm{d} s \notag\\
&\lesssim \|\nabla \eta\|_{\infty} + c \e^{A t} \|\Delta_{\mu}^{(k)} \eta\|_{\infty} \sqrt{t}.\label{EstUniform}
\end{align}
Hence, $\sup_{s \in [0,t]} |\nabla P_{s}^{(k)}\eta| < \infty$. Also note that
there exists $A > 0$ such that
\begin{align*}
\sup_{s \in [0,t]} \left| \tt{Q}_s {/\!/}_{s,\, x}^{-1} \nabla P_{t-s}^{(k)}\eta (X_s(x)) \right| \leqslant \e^{A + A t} \left( \|\nabla\eta\|_{\infty} + \|\Delta_{\mu}^{(k)} \eta\|_{\infty} \right) < \infty
\end{align*}
for all $\eta \in \OO_0^{(k)}$.
As a consequence of these bounds, we conclude that the local martingale \eqref{MartCrucialnew} is a true martingale for the constant function $\ell_s\equiv1$ as well. Taking expectations at the endpoints $0$ and $t$,
we derive the following global Bismut formula, i.e.,
\begin{align}\label{BE-2}
\left \langle \nabla P_t^{(k)}\eta, \xi \right \rangle (x) = - \E \left\langle\ptr_{t,x}^{-1} \nabla \eta\left(X_t(x)\right), \tilde{\mathscr{Q}}_t \xi \right \rangle  -\mathbb{E} \left[ \left\langle \ptr_{t,x}^{-1}\eta (X_t(x)),  \mathscr{Q}_t\int_{0}^{t} \mathscr{Q}_s^{-1} H_{\ptr_{s,x}}^{(k),\tr}\tilde{\mathscr{Q}}_s \xi \,  \d s\right\rangle\right],
\end{align}
 holds for $\eta \in \OO_0^{(k)}$.
Note that under condition {\bf (C)}, it follows from \eqref{BE-2} and the estimates \eqref{KKestimate1}, \eqref{esti-KK} that there exists a constant $A > 0$ such that
\begin{align}\label{GradEstimate}
|\nabla P_t^{(k)}\eta| \leqslant C \e^{A t} \left( (P_t |\nabla \eta|^2)^{1/2} +   (P_t |\eta|^2)^{1/2} \right),\quad\eta \in \OO_0^{(k)}.
\end{align}
It remains to show that estimate \eqref{GradEstimate} extends from
$\OO_0^{(k)}$ to $\OO_{b,1}^{(k)}$. This can be done by a standard approximation
argument. As $M$ is geodesically complete, there exists a sequence $(\varphi_n)_{n\in\N}$ of first order cut-off functions (e.g.~\cite[Theorem III.3~a)]{Gueneysu-Book}) with the properties
\begin{enumerate}[(i)]\openup-2\jot
\item $0\leqslant\varphi_n\leqslant1$ for all $n\in\N_+$;
\item for each compact $K\subset M$ there is $n_0(K)\in\N_+$ such that
$\varphi_n|K\equiv1$ for all $n\geqslant n_0(K)$;
\item $\|\nabla\varphi_n\|_\infty\to0$ as $n\to\infty$.
\end{enumerate}
We replace $\eta$ by $\eta_n:=\varphi_n\eta$ and then pass to the limit in the estimate as $n\to\infty$. From the local Bismut formula \eqref{localBismut} it is then easy to see that
$\nabla P_t^{(k)}\eta_n\to \nabla P_t^{(k)}\eta$ as $n\to\infty$. For the right-hand-side, we trivially have 
\(
(P_t|\eta_n|^2)^{1/2}
\to
(P_t|\eta|^2)^{1/2},
\)
and
\(
(P_t|\nabla\eta_n|^2)^{1/2}
\to
(P_t|\nabla\eta|^2)^{1/2}.
\)
Passing to the limit in the estimates for \(\eta_n\), one obtains the same estimates for
\(\eta\in\Omega_{b,1}^{(k)}\), as $n\to\infty$.
\end{proof}

\begin{remark}
  Since the estimates \eqref{EstUniform} are uniform on compact time intervals,
  it also follows that \eqref{MartCrucialnew} is a true martingale
for  any $\eta \in \OO_{b,1}^{(k)}$ and $\ell\in C^1([0,t])$, establishing the following global version of Bismut's formula:
\begin{align}\label{BE-1}
&\left\langle \nabla P_t^{(k)}\eta, \xi  \right\rangle(x)=-\mathbb{E} \left[ \left\langle  \ptr_{t, \, x}^{-1}\eta (X_{t}(x)), \mathscr{Q}_{t}\int_{0}^{t} \dot\ell_s \mathscr{Q}_s^{-1} \tilde{\mathscr{Q}}_s\xi  \, \d B_s + \mathscr{Q}_{t}\int_{0}^{t} \ell_s\mathscr{Q}_s^{-1} H _{\ptr_{s,x}}^{(k),\tr}\tilde{\mathscr{Q}}_s\xi   \,  \d s\right\rangle\right],
\end{align}
for a general deterministic  $\ell\in C^1([0,t])$ with  $ \ell_t=0 $  and $ \ell_0 =1$ as well.
A standard choice for $\ell_s$ is $\ell_s:=(t-s)/t$, so that $\dot\ell_s=-1/t$.
\end{remark}

\bibliographystyle{plain}
\providecommand{\bysame}{\leavevmode\hbox to3em{\hrulefill}\thinspace}
\providecommand{\MR}{\relax\ifhmode\unskip\space\fi MR }

\end{document}